\documentclass[11pt]{amsart}
\usepackage[utf8]{inputenc}
\usepackage[T1]{fontenc}
\usepackage{indentfirst, amssymb, amsmath, amsthm, mathrsfs, setspace, enumerate}
\usepackage{xcolor}
\definecolor{purple}{rgb}{0.5,0,0.5}
\usepackage[colorlinks=true,linkcolor=purple,citecolor=blue,urlcolor=magenta]{hyperref}
\newtheorem{theo}{Theorem}[section]
\newtheorem{lem}{Lemma}[section]
\newtheorem{prop}{Proposition}[section]
\newtheorem{cor}{Corollary}[section]
\newtheorem{rem}{Remark}[section]
\newtheorem{definition}{Definition}[section]

\numberwithin{equation}{section}

\begin{document}

\title[Directional Coefficient Inequalities for Strongly Starlike Mappings]{Directional Coefficient Inequalities for Strongly Starlike Mappings in Complex Banach Spaces}

\author[P. Das and N. Sarkar]{Pradip Das and Nabadwip Sarkar}

\address{Pradip Das, Department of Mathematics, Raiganj University, Raiganj, West Bengal-733134, India.}
\email{pradipsmath@gmail.com}

\address{Nabadwip Sarkar, Amity School of Applied Sciences, Amity University Mumbai, Panvel, Navi Mumbai, Maharashtra-410206, India}
\email{nsarkar@mum.amity.edu, nabadwipsarkar52@gmail.com}

\makeatletter
\@namedef{subjclassname@2020}{\textup{2020} Mathematics Subject Classification}
\makeatother

\subjclass[2020]{32H02, 30C45, 30C50, 46G20}
\keywords{Strongly starlike mappings, Complex Banach spaces, Directional coefficients, Hankel determinant, Fekete--Szeg\H{o} problem, Logarithmic coefficients}

\begin{abstract}
We study a class of strongly starlike mappings of order $\alpha$ on the open unit ball of a complex Banach space under a scalar-profile condition. Using a slice reduction along one-dimensional complex subspaces, we prove a coefficient correspondence theorem identifying the directional coefficients with the Taylor coefficients of an associated one-variable strongly starlike function. We further prove a lifting theorem showing that every one-variable strongly starlike function can be realized by a scalar-profile Banach-space mapping. Consequently, the extremal problems for polynomial directional coefficient functionals coincide exactly with their one-variable counterparts. We apply this correspondence to obtain bounds for $|A_2A_3 - A_4|$, $|A_2A_4 - A_3^2|$, and $|A_3 - A_2^2|$, together with the corresponding estimates for $|A_4|$ and $|A_5|$. We also establish a parameterized Fekete--Szeg\H{o} estimate and a sharp bound for the second Hankel determinant of directional logarithmic coefficients.
\end{abstract}

\maketitle

\section{Introduction}

Let $\mathbb{D} := \{z \in \mathbb{C} : |z| < 1\}$ denote the open unit disk. Let $\mathcal{A}$ be the family of analytic functions $f$ on $\mathbb{D}$ normalized by $f(0)=0$, $f'(0)=1$, so that
\[
f(z) = z + \sum_{n=2}^{\infty} a_n z^n.
\]
The Hankel determinant $H_{q,n}(f)$ of order $q$ and shift $n$ is defined by
\begin{equation}
H_{q,n}(f) :=
\begin{vmatrix}
a_n & a_{n+1} & \cdots & a_{n+q-1}\\
a_{n+1} & a_{n+2} & \cdots & a_{n+q}\\
\vdots & \vdots & \ddots & \vdots\\
a_{n+q-1} & a_{n+q} & \cdots & a_{n+2(q-1)}
\end{vmatrix}.
\end{equation}
For $\alpha \in (0,1]$, a function $f \in \mathcal{A}$ is strongly starlike of order $\alpha$, written $f \in \mathcal{S}^*_\alpha$, if
\[
\left|\arg\left(\frac{zf'(z)}{f(z)}\right)\right| < \alpha \frac{\pi}{2}
\qquad (z \in \mathbb{D}).
\]
Cho, Kowalczyk, Kwon, Lecko, and Sim \cite{cho2017} obtained sharp bounds for coefficient functionals occurring in the expansion of the third-order Hankel determinant over $\mathcal{S}^*_\alpha$, including
\[
|a_2 a_3 - a_4|, \qquad |a_2 a_4 - a_3^2|, \qquad |a_3 - a_2^2|.
\]

Extending such results to locally biholomorphic mappings on the open unit ball $\mathbb{B}$ of a complex Banach space $X$ requires multilinear operator formulations, because vector division is not defined. Relevant earlier work includes Suffridge \cite{suffridge}, Gong \cite{gong}, Graham--Kohr \cite{graham}, and Pfaltzgraff \cite{pfaltzgraff}. Xu, D\L{}ugosz, and Liczberski \cite{XuDlugosz} studied third-order Hankel determinants for quasi-convex mappings in complex Banach spaces. Further results on parameterized operator functionals and several complex variables appear in \cite{elin2024,hamada2023,xu2024}.

In this paper we introduce a Banach space analogue of strongly starlike mappings under a scalar-profile restriction. We prove a slice correspondence for the scalar-profile class, showing that the directional coefficients of any scalar-profile mapping coincide with the Taylor coefficients of the associated slice function. We then prove a lifting theorem in the converse direction: every one-variable strongly starlike function arises as a slice of some scalar-profile Banach-space mapping. Together these two results give an equality of the extremal problems for polynomial directional coefficient functionals: the supremum of any such functional over the Banach-space class equals the supremum over the one-variable class. Thus sharpness statements, and not only numerical inequalities, can be transferred from the one-variable theory to the Banach space setting.

Coefficient inequalities for subclasses of strongly starlike mappings in several complex variables were studied by Xu and Xu \cite{xu2018_strongly}. Their approach used direct expansion techniques. The second Hankel determinant of logarithmic coefficients for strongly starlike functions in one variable was studied by Kowalczyk and Lecko \cite{Kowalczyk2023}. The scalar-profile form considered here is a subclass of mappings in higher-dimensional function theory; the present paper gives a correspondence for this subclass.

The paper is organized as follows. Section 2 gives definitions and preliminaries. Section 3 contains the slice correspondence and the lifting theorem. Section 4 collects the one-variable results that will be used. Section 5 gives the directional coefficient inequalities. Section 6 describes the extremal mappings. Section 7 treats the second Hankel determinant of directional logarithmic coefficients.

\section{Definitions and Preliminaries}

Let $(X,\|\cdot\|)$ be a complex Banach space. Let
\[
\mathbb{B} := \{x \in X : \|x\| < 1\}
\]
be its open unit ball, and let $\partial \mathbb{B} := \{x \in X : \|x\| = 1\}$ be its boundary. Let $L(X,Y)$ denote the space of continuous linear operators from $X$ into $Y$, and let $I \in L(X,X)$ be the identity operator. For $x \in X \setminus \{0\}$, define
\begin{equation}
T(x) := \{T_x \in L(X,\mathbb{C}) : T_x(x) = \|x\|,\ \|T_x\| = 1\}.
\end{equation}
By the Hahn--Banach theorem, $T(x)$ is nonempty.

Let $H(\mathbb{B})$ be the set of all holomorphic mappings from $\mathbb{B}$ into $X$. A mapping $F \in H(\mathbb{B})$ is locally biholomorphic if $DF(x)$ has a bounded inverse $(DF(x))^{-1} \in L(X,X)$ for every $x \in \mathbb{B}$. The $n$-th Fr\'echet derivative $D^n F(0)$ is a bounded symmetric $n$-linear mapping from $X^n$ into $X$. We say that $F$ is normalized if $F(0) = 0$ and $DF(0) = I$.

For a fixed direction $x_0 \in \partial \mathbb{B}$ and a functional $T_{x_0} \in T(x_0)$, the directional coefficients $A_n$ are defined by
\begin{equation}\label{pra1}
A_1 = 1, \qquad
A_n = \frac{1}{n!} T_{x_0}\bigl(D^n F(0)(x_0^n)\bigr), \quad n \ge 2.
\end{equation}

The following lemma shows that evaluation on a one-dimensional subspace does not depend on the choice of norming functional.

\begin{lem}\label{pra2}
Let $x \in X \setminus \{0\}$. For any two norming functionals $T_x, S_x \in T(x)$ and any $v \in \operatorname{span}\{x\}$, we have
\[
T_x(v) = S_x(v).
\]
\end{lem}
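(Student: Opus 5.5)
The proof is a direct linearity argument. Since $v \in \operatorname{span}\{x\}$, I will write $v = \lambda x$ for some $\lambda \in \mathbb{C}$; this is possible because $x \neq 0$, so $\operatorname{span}\{x\} = \{\lambda x : \lambda \in \mathbb{C}\}$. Both $T_x$ and $S_x$ are complex-linear functionals, which gives
\[
T_x(v) = \lambda\, T_x(x) \qquad \text{and} \qquad S_x(v) = \lambda\, S_x(x).
\]

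The defining condition of the set $T(x)$ gives $T_x(x) = \|x\| = S_x(x)$. Therefore both expressions equal $\lambda \|x\|$, and the claimed identity $T_x(v) = S_x(v)$ follows.

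The only point needing care is that $\lambda$ is a complex scalar, so complex-linearity, and not merely real-linearity, is required. This holds because $T(x) \subset L(X,\mathbb{C})$ consists of continuous $\mathbb{C}$-linear maps. The norm condition $\|T_x\| = 1$ is not used at all, and there is no genuine obstacle.

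As a consequence worth noting for the later slice reduction, the directional coefficients $A_n$ in \eqref{pra1} do not depend on the choice of $T_{x_0}$ whenever $D^nF(0)(x_0^n)$ lies in $\operatorname{span}\{x_0\}$. This is exactly the situation for scalar-profile mappings.
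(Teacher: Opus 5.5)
Your proof is correct and matches the paper's argument: write $v=\lambda x$, use the complex-linearity of $T_x$ and $S_x$, and the common value $T_x(x)=S_x(x)=\|x\|$ to get $T_x(v)=S_x(v)=\lambda\|x\|$. Your closing observation is also right: $A_n$ is independent of $T_{x_0}$ for scalar-profile mappings, because Proposition~\ref{pra8} places $D^nF(0)(x_0^n)$ in $\operatorname{span}\{x_0\}$.
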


\begin{proof}
Write $v = \lambda x$ for some $\lambda \in \mathbb{C}$. Since $T_x(x) = S_x(x) = \|x\|$,
\[
T_x(v) = T_x(\lambda x) = \lambda T_x(x) = \lambda \|x\|
\]
and
\[
S_x(v) = S_x(\lambda x) = \lambda S_x(x) = \lambda \|x\|.
\]
Thus $T_x(v) = S_x(v)$.
\end{proof}

\begin{definition}\label{pra3}
Let $F \in H(\mathbb{B})$ be normalized and locally biholomorphic, and suppose that
\[
F(x) = g(x)x,
\]
where $g : \mathbb{B} \to \mathbb{C}$ is holomorphic. We require
\begin{equation}\label{pra4}
g(x) \neq 0 \qquad (x \in \mathbb{B}).
\end{equation}
Since $g(0) = 1$ by normalization, this condition is nontrivial only for $x \neq 0$. For $\alpha \in (0,1]$, we say that $F$ belongs to the scalar-profile strongly starlike class of order $\alpha$, denoted by $\mathcal{S}^*_\alpha(\mathbb{B})$, if for every $x \in \mathbb{B} \setminus \{0\}$ and every $T_x \in T(x)$,
\begin{equation}\label{pra5}
\left|
\arg\left(T_x\bigl((DF(x))^{-1}F(x)\bigr)\right)
\right|
< \alpha \frac{\pi}{2}.
\end{equation}
\end{definition}

\begin{rem}\label{pra6}
Condition \eqref{pra4} ensures that the argument in \eqref{pra5} is defined. Indeed, for $F(x) = g(x)x$ with $x \in \mathbb{B} \setminus \{0\}$, differentiation gives
\[
DF(x)x = g(x)x + [Dg(x)x]x = \bigl(g(x) + Dg(x)x\bigr)x.
\]
Since $F$ is locally biholomorphic, $DF(x)$ is invertible, so the eigenvalue $g(x) + Dg(x)x$ is nonzero. Hence
\[
(DF(x))^{-1}x = \frac{1}{g(x) + Dg(x)x}\,x,
\]
and therefore
\[
(DF(x))^{-1}F(x) = g(x)(DF(x))^{-1}x = \frac{g(x)}{g(x) + Dg(x)x}\,x = \lambda(x)\,x
\]
with $\lambda(x) = g(x)/(g(x) + Dg(x)x)$. Since $g(x) \neq 0$ by \eqref{pra4} and the denominator is nonzero, we have $\lambda(x) \neq 0$. Thus $T_x((DF(x))^{-1}F(x)) = \lambda(x)\|x\|$ is nonzero, and its argument is defined.

When $X = \mathbb{C}$ and $\mathbb{B} = \mathbb{D}$, a normalized mapping becomes an analytic function
\[
f(z) = z + \sum_{n=2}^{\infty} a_n z^n.
\]
The unique norming functional is $T_z(v) = v|z|/z$. Hence
\[
T_z\bigl((DF(z))^{-1}f(z)\bigr)
= T_z\left(\frac{f(z)}{f'(z)}\right)
= |z|\frac{f(z)}{z f'(z)}.
\]
Taking arguments, the positive factor $|z|$ cancels, and we recover the classical definition
\[
\left|\arg\left(\frac{z f'(z)}{f(z)}\right)\right| < \alpha \frac{\pi}{2}.
\]
This is the definition of $\mathcal{S}^*_\alpha$.

For the scalar-profile subclass, the vector $(DF(x))^{-1}F(x)$ lies in $\operatorname{span}\{x\}$. Since every $T_x \in T(x)$ satisfies $T_x(x) = \|x\|$, Lemma \ref{pra2} shows that the evaluation on this one-dimensional subspace does not depend on the choice of $T_x$. Thus condition \eqref{pra5} is independent of the particular norming functional selected.
\end{rem}

\begin{rem}\label{pra7}
If $F(x) = g(x)x$ is normalized, then $DF(0) = I$. Differentiating at $x = 0$ gives
\[
DF(0)v = g(0)v + [Dg(0)v]\,0 = g(0)v.
\]
Since $DF(0) = I$, we obtain $g(0) = 1$. Thus the normalization $g(0) = 1$ is automatic rather than an additional assumption. It is retained below as a convenient explicit normalization.
\end{rem}

\section{Slice Reduction and Coefficient Correspondence}

Let $F(x) = g(x)x$ be normalized on $\mathbb{B}$. Fix $x_0 \in \partial \mathbb{B}$ and $T_{x_0} \in T(x_0)$. Define the slice function $f : \mathbb{D} \to \mathbb{C}$ by
\[
f(\xi) = T_{x_0}\bigl(F(\xi x_0)\bigr) = g(\xi x_0)\xi.
\]
Since $T_{x_0}(x_0) = 1$ and $F(\xi x_0) = g(\xi x_0)\xi x_0$, this definition is independent of the particular choice of $T_{x_0} \in T(x_0)$.

\begin{prop}\label{pra8}
Let $F(x) = g(x)x$ be a normalized holomorphic mapping on $\mathbb{B}$. Then for every $x_0 \in \partial \mathbb{B}$ and every $n \in \mathbb{N}$,
\begin{equation}\label{pra9}
\frac{1}{n!} D^n F(0)(x_0^n)
=
\frac{1}{(n-1)!}\bigl(D^{n-1} g(0)(x_0^{n-1})\bigr) x_0.
\end{equation}
\end{prop}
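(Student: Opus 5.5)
The plan is to reduce everything to one complex variable by restricting to the complex line through $x_0$. The key fact is that for a holomorphic map $H$ on $\mathbb{B}$ (vector- or scalar-valued) and fixed $x_0\in\partial\mathbb{B}$, the function $\xi\mapsto H(\xi x_0)$ is holomorphic on $\mathbb{D}$. Its Taylor expansion at $0$ is
\[
H(\xi x_0)=\sum_{k\ge 0}\frac{1}{k!}D^kH(0)(x_0^k)\,\xi^k ,
\]
which follows from the homogeneous polynomial expansion of $H$ about $0$, since $D^kH(0)((\xi x_0)^k)=\xi^k D^kH(0)(x_0^k)$. Equivalently, $\frac{d^k}{d\xi^k}H(\xi x_0)\big|_{\xi=0}=D^kH(0)(x_0^k)$ by the chain rule for Fréchet derivatives along the linear map $\xi\mapsto \xi x_0$. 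I will take this as standard.

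First I apply this to $g$, which gives the scalar expansion
\[
g(\xi x_0)=\sum_{k\ge0}\frac{1}{k!}D^kg(0)(x_0^k)\,\xi^k ,\qquad |\xi|<1 .
\]
Multiplying by $\xi x_0$ yields
\[
F(\xi x_0)=g(\xi x_0)\,\xi x_0=\sum_{k\ge0}\frac{1}{k!}\bigl(D^kg(0)(x_0^k)\bigr)\,\xi^{k+1}x_0 ,
\]
a convergent $X$-valued power series in $\xi$.

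Next I apply the same expansion to $F$ itself, obtaining $F(\xi x_0)=\sum_{n\ge0}\frac{1}{n!}D^nF(0)(x_0^n)\xi^n$. Uniqueness of Taylor coefficients for $X$-valued holomorphic functions of one variable then lets me compare the coefficient of $\xi^n$ in the two series. This uniqueness can be seen by applying every $\varphi\in X^*$ and invoking Hahn--Banach, or directly via Cauchy's formula. Setting $k=n-1$ gives exactly \eqref{pra9} for $n\ge1$.

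The case $n=1$ serves as a consistency check. The right-hand side is $g(0)x_0=x_0$ by Remark \ref{pra7}, which agrees with $DF(0)x_0=x_0$.

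The only delicate point is justifying the restricted Taylor expansion with the correct factorials, namely that the $k$-th coefficient of $\xi\mapsto H(\xi x_0)$ is $D^kH(0)(x_0^k)/k!$. I would handle this by citing the standard power series representation of holomorphic maps on Banach space balls, which converges on $\mathbb{B}$ since $\|\xi x_0\|<1$. Alternatively, I would iterate the chain rule: $\frac{d}{d\xi}D^{j}H(\xi x_0)(x_0^{j})=D^{j+1}H(\xi x_0)(x_0^{j+1})$, and evaluate at $\xi=0$. The rest of the argument is routine.
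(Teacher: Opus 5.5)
Your proof is correct, but it takes a different route from the paper.

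The paper never restricts to the slice. It applies the higher-order Leibniz rule for Fréchet derivatives to the product $g\cdot G$ with $G(x)=x$:
\[
D^nF(0)(x_0^n)=\sum_{k=0}^{n}\binom{n}{k}D^{n-k}g(0)(x_0^{n-k})\,D^kG(0)(x_0^k),
\]
and only the $k=1$ term survives because $G(0)=0$ and $D^kG\equiv 0$ for $k\ge 2$.

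You instead move everything onto the line $\xi\mapsto\xi x_0$. There the product rule becomes the trivial fact that multiplying a power series by $\xi$ shifts its coefficients, and you read off \eqref{pra9} by uniqueness of Taylor coefficients of an $X$-valued holomorphic function of one variable.

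\textbf{What your route buys.} It needs only the identity $\frac{d^k}{d\xi^k}H(\xi x_0)\big|_{\xi=0}=D^kH(0)(x_0^k)$, which your iterated chain-rule argument justifies cleanly, together with one-variable vector-valued power series. So you never have to state or justify the diagonal form of the multilinear Leibniz formula. It also yields directly the slice expansion of $g(\xi x_0)$ and the coefficient formula $a_n=\frac{1}{(n-1)!}D^{n-1}g(0)(x_0^{n-1})$, which the paper writes out separately in the proof of Theorem \ref{pra12}.

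\textbf{What the paper's route buys.} It works at the level of the multilinear maps themselves. The same Leibniz computation with general arguments gives the full symmetric form
\[
D^nF(0)(v_1,\dots,v_n)=\sum_{i=1}^{n}D^{n-1}g(0)(v_1,\dots,\widehat{v_i},\dots,v_n)\,v_i .
\]
Your slice argument sees only diagonal values, which determine the symmetric map only after polarization.

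For the diagonal statement actually claimed, both arguments are complete.
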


\begin{proof}
Let $G(x) = x$ be the identity mapping. Then $DG(x) = I$ and $D^k G(x) = 0$ for $k \ge 2$. By the Leibniz rule,
\[
D^n F(0)(x_0^n)
=
\sum_{k=0}^{n}\binom{n}{k}
D^{n-k} g(0)(x_0^{n-k}) D^k G(0)(x_0^k).
\]
Since $G(0) = 0$ and $D^k G(0) = 0$ for $k \ge 2$, only the term with $k = 1$ remains. Hence
\[
D^n F(0)(x_0^n)
=
\binom{n}{1} D^{n-1} g(0)(x_0^{n-1}) DG(0) x_0
=
n D^{n-1} g(0)(x_0^{n-1}) x_0.
\]
Dividing by $n!$ gives \eqref{pra9}.
\end{proof}

\begin{lem}\label{pra10}
Let $F(x) = g(x)x$ be normalized and locally biholomorphic on $\mathbb{B}$, satisfying \eqref{pra4}. Let $x_0 \in \partial \mathbb{B}$ and let $f(\xi) = g(\xi x_0)\xi$. Then for every $\xi \in \mathbb{D} \setminus \{0\}$,
\begin{equation}\label{pra11}
T_{x_0}\bigl((DF(\xi x_0))^{-1} F(\xi x_0)\bigr) = \frac{f(\xi)}{f'(\xi)}.
\end{equation}
\end{lem}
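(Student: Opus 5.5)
We will use the eigenvector computation from Remark \ref{pra6}, evaluated at the point $x = \xi x_0$, and then compare the resulting scalar with the one-variable derivative of the slice function $f$.

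\textbf{Step 1 (apply Remark \ref{pra6}).} Fix $\xi \in \mathbb{D}\setminus\{0\}$ and put $x = \xi x_0$, which lies in $\mathbb{B}\setminus\{0\}$. Remark \ref{pra6} shows that $x$ is an eigenvector of $DF(x)$ with eigenvalue $g(x) + Dg(x)x$, and that this eigenvalue is nonzero by local biholomorphy. Hence
\[
(DF(x))^{-1}F(x) = \frac{g(x)}{g(x)+Dg(x)x}\,x .
\]
Since $\xi x_0$ and $x_0$ span the same line, we may apply $T_{x_0}$ and use $T_{x_0}(x_0)=1$. This gives
\[
T_{x_0}\bigl((DF(\xi x_0))^{-1}F(\xi x_0)\bigr) = \frac{g(\xi x_0)\,\xi}{g(\xi x_0) + Dg(\xi x_0)(\xi x_0)} .
\]

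\textbf{Step 2 (differentiate the slice).} Put $h(\xi) = g(\xi x_0)$. By the chain rule, $h'(\xi) = Dg(\xi x_0)x_0$, and by linearity $Dg(\xi x_0)(\xi x_0) = \xi h'(\xi)$. Since $f(\xi) = \xi h(\xi)$, we get
\[
f'(\xi) = h(\xi) + \xi h'(\xi) = g(\xi x_0) + Dg(\xi x_0)(\xi x_0).
\]
This is exactly the denominator from Step 1. Its nonvanishing, established in Remark \ref{pra6}, shows that $f'(\xi)\neq 0$, so the quotient $f(\xi)/f'(\xi)$ is defined.

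\textbf{Step 3 (conclude).} The numerator in Step 1 is $g(\xi x_0)\xi = f(\xi)$. Substituting Step 2 into the formula of Step 1 yields \eqref{pra11}. By Lemma \ref{pra2}, the left-hand side does not depend on the choice of $T_{x_0}\in T(x_0)$, which is consistent with the statement.

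The only point that needs care is the scaling in Step 1: $T_{x_0}$ must be applied to a vector proportional to $\xi x_0$, not to $x_0$ itself. This is harmless because $T_{x_0}$ is linear and complex-homogeneous, so $T_{x_0}(\lambda\,\xi x_0) = \lambda\xi$. No norm $\|\xi x_0\|$ enters the formula, in contrast with the $T_x$ formula in Remark \ref{pra6}. The remaining identification of $f'$ is a routine chain-rule computation.
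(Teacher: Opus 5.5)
Your proof is correct, but it takes a different route from the paper's. You reuse the eigenvector identity of Remark \ref{pra6}: $DF(\xi x_0)x_0 = f'(\xi)\,x_0$, and $DF(\xi x_0)$ is injective. Together these give $f'(\xi)\neq 0$, and they show that $v = \bigl(f(\xi)/f'(\xi)\bigr)x_0$ is \emph{the} solution of $DF(\xi x_0)v = F(\xi x_0)$. So all you need is uniqueness of preimages under an invertible operator.

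The paper instead splits $X = \ker T_{x_0}\oplus\operatorname{span}\{x_0\}$ and writes an arbitrary solution as $v = \lambda x_0 + w$. Projecting the equation onto $\ker T_{x_0}$ gives $g(\xi x_0)w = 0$, and the paper invokes \eqref{pra4} to conclude $w = 0$ before solving for $\lambda$. It then still needs your eigenvalue argument to see that $f'(\xi)\neq 0$.

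Your version is shorter, and it shows that hypothesis \eqref{pra4} is not needed for \eqref{pra11} at all. That hypothesis matters only for the quantity in \eqref{pra5} to be nonzero, so that its argument is defined. The paper's decomposition solves the equation rather than verifying a candidate, and it shows how $DF(\xi x_0)$ acts on the two summands. Here, though, that extra structure gives nothing beyond what invertibility already provides.
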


\begin{proof}
Differentiating $F(x) = g(x)x$ at $x = \xi x_0$ gives
\[
DF(\xi x_0)v = g(\xi x_0)v + [Dg(\xi x_0)v]\xi x_0.
\]
Since $F$ is locally biholomorphic, $DF(\xi x_0)$ is invertible. Let
\[
v = (DF(\xi x_0))^{-1} F(\xi x_0).
\]
Since $T_{x_0}(x_0) = 1$, the kernel and span form a direct sum. Indeed, if $v \in \ker T_{x_0} \cap \operatorname{span}\{x_0\}$, then $v = \mu x_0$ for some $\mu \in \mathbb{C}$ and $0 = T_{x_0}(v) = \mu T_{x_0}(x_0) = \mu$, so $v = 0$. Moreover, every $v \in X$ can be written as
\[
v = \bigl(v - T_{x_0}(v)x_0\bigr) + T_{x_0}(v)x_0,
\]
with $v - T_{x_0}(v)x_0 \in \ker T_{x_0}$ and $T_{x_0}(v)x_0 \in \operatorname{span}\{x_0\}$. Thus $X = \ker T_{x_0} \oplus \operatorname{span}\{x_0\}$, and we may write $v = \lambda x_0 + w$ with $w \in \ker T_{x_0}$ and $\lambda \in \mathbb{C}$.

Substituting into
\[
DF(\xi x_0)v = F(\xi x_0) = g(\xi x_0)\xi x_0
\]
gives
\[
g(\xi x_0)w + [Dg(\xi x_0)w]\xi x_0
+ \lambda\bigl(g(\xi x_0) + \xi Dg(\xi x_0)x_0\bigr)x_0
= g(\xi x_0)\xi x_0.
\]

Comparing the two components with respect to the decomposition $X = \ker T_{x_0} \oplus \operatorname{span}\{x_0\}$: the term $g(\xi x_0)w$ lies in $\ker T_{x_0}$, since $w \in \ker T_{x_0}$ and scalar multiplication preserves the kernel. All remaining terms are multiples of $x_0$, hence lie in $\operatorname{span}\{x_0\}$. Projecting onto $\ker T_{x_0}$ gives
\[
g(\xi x_0)w = 0.
\]
Since $g(\xi x_0) \neq 0$ by \eqref{pra4}, we obtain $w = 0$. In particular, $Dg(\xi x_0)w = 0$. Projecting now onto $\operatorname{span}\{x_0\}$ and using $w = 0$,
\[
\lambda\bigl(g(\xi x_0) + \xi Dg(\xi x_0)x_0\bigr) = g(\xi x_0)\xi.
\]

On the other hand, direct differentiation of $f(\xi) = g(\xi x_0)\xi$ gives
\[
f'(\xi) = g(\xi x_0) + \xi Dg(\xi x_0)x_0.
\]
Moreover,
\[
DF(\xi x_0)x_0
=
\bigl[g(\xi x_0) + \xi Dg(\xi x_0)x_0\bigr]x_0
=
f'(\xi)x_0,
\]
so $f'(\xi)$ is the eigenvalue of the invertible operator $DF(\xi x_0)$ on the radial direction $x_0$, and hence $f'(\xi) \neq 0$. Therefore
\[
\lambda = \frac{\xi g(\xi x_0)}{g(\xi x_0) + \xi Dg(\xi x_0)x_0} = \frac{f(\xi)}{f'(\xi)}.
\]
Since $T_{x_0}(v) = \lambda$, the result follows.
\end{proof}

\begin{theo}\label{pra12}
Let $F \in \mathcal{S}^*_\alpha(\mathbb{B})$ with $F(x) = g(x)x$, and let $x_0 \in \partial \mathbb{B}$. Then the slice function $f$ belongs to $\mathcal{S}^*_\alpha$, and
\begin{equation}\label{pra13}
A_n = a_n \qquad (n \in \mathbb{N}),
\end{equation}
where $a_n$ are the Taylor coefficients of $f$.
\end{theo}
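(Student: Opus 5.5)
The proof splits into two parts: showing that $f$ lies in $\mathcal{S}^*_\alpha$, and identifying the coefficients.

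\textbf{Membership $f\in\mathcal{S}^*_\alpha$.} First, $f(\xi)=g(\xi x_0)\xi$ is holomorphic on $\mathbb{D}$, because $\xi\mapsto \xi x_0$ maps $\mathbb{D}$ into $\mathbb{B}$ (recall $\|x_0\|=1$). Normalization follows from $g(0)=1$ (Remark \ref{pra7}): we get $f(0)=0$ and $f'(0)=g(0)=1$, so $f\in\mathcal{A}$.

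Now fix $\xi\in\mathbb{D}\setminus\{0\}$ and put $x=\xi x_0\in\mathbb{B}\setminus\{0\}$. The functional $T_x:=\frac{|\xi|}{\xi}T_{x_0}$ has norm $1$ and satisfies $T_x(x)=|\xi|=\|x\|$, so $T_x\in T(x)$. Condition \eqref{pra5} then gives
\[
\left|\arg\Bigl(\tfrac{|\xi|}{\xi}\,T_{x_0}\bigl((DF(\xi x_0))^{-1}F(\xi x_0)\bigr)\Bigr)\right|<\alpha\frac{\pi}{2}.
\]
By Lemma \ref{pra10} the inner quantity equals $f(\xi)/f'(\xi)$. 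Hence
\[
\left|\arg\bigl(|\xi| f(\xi)/(\xi f'(\xi))\bigr)\right|<\alpha\pi/2 .
\]
The positive factor $|\xi|$ does not change the argument, and the argument of the reciprocal is the negative of the argument. Therefore $\left|\arg\bigl(\xi f'(\xi)/f(\xi)\bigr)\right|<\alpha\pi/2$ for all $\xi\neq 0$.

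Lemma \ref{pra10} also gives $f'(\xi)\neq 0$. In addition, $f(\xi)\neq0$ for $\xi\neq0$, by \eqref{pra4}. So all quotients above are well defined.

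\textbf{Coefficient identity.} Apply $T_{x_0}$ to \eqref{pra9} of Proposition \ref{pra8} and use $T_{x_0}(x_0)=1$:
\[
A_n=\frac{1}{(n-1)!}D^{n-1}g(0)(x_0^{n-1}).
\]
On the other side, $\phi(\xi):=g(\xi x_0)$ is holomorphic on $\mathbb{D}$. By the chain rule for Fréchet derivatives, $\phi^{(k)}(0)=D^kg(0)(x_0^k)$; equivalently, the homogeneous expansion of $g$ at $0$ gives $g(\xi x_0)=\sum_k \frac{\xi^k}{k!}D^kg(0)(x_0^k)$ near $\xi=0$. Then
\[
f(\xi)=\xi\phi(\xi)=\sum_{n\ge1}\frac{D^{n-1}g(0)(x_0^{n-1})}{(n-1)!}\,\xi^n,
\]
so $a_n=A_n$ for $n\ge2$. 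For $n=1$, both sides equal $1$ by normalization.

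I expect the only delicate point to be the step where \eqref{pra5}, which is stated with $T_x$ for $x=\xi x_0$, is converted into a statement about $T_{x_0}$. The rescaled functional $\frac{|\xi|}{\xi}T_{x_0}$ handles this conversion, and Remark \ref{pra6} guarantees that the choice of norming functional is irrelevant.
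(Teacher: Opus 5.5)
Your proof is correct and follows essentially the same route as the paper: you apply the sector condition at $\xi x_0$, use Lemma \ref{pra10} to identify the quantity with $f(\xi)/f'(\xi)$, drop the positive factor $|\xi|$ and pass to the reciprocal, and then use Proposition \ref{pra8} with $T_{x_0}(x_0)=1$ for the coefficients. The only minor difference is that you test \eqref{pra5} against the explicit norming functional $\frac{|\xi|}{\xi}T_{x_0}\in T(\xi x_0)$, so you need only the statement of Lemma \ref{pra10}; the paper instead evaluates an arbitrary $T_{\xi x_0}$ using $(DF(\xi x_0))^{-1}F(\xi x_0)\in\operatorname{span}\{x_0\}$ (from the proof of Lemma \ref{pra10}) together with Lemma \ref{pra2}.
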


\begin{proof}
Let $F \in \mathcal{S}^*_\alpha(\mathbb{B})$. Fix $x_0 \in \partial \mathbb{B}$. For $\xi \in \mathbb{D} \setminus \{0\}$, the sector condition \eqref{pra5} gives
\begin{equation}\label{pra14}
\left|
\arg\left(
T_{\xi x_0}\bigl((DF(\xi x_0))^{-1} F(\xi x_0)\bigr)
\right)
\right|
< \alpha \frac{\pi}{2}.
\end{equation}
By Lemma \ref{pra10}, the vector $v = (DF(\xi x_0))^{-1} F(\xi x_0)$ lies in $\operatorname{span}\{x_0\}$. Hence, by Lemma \ref{pra2}, the evaluation does not depend on the choice of the norming functional. For any $T_{\xi x_0} \in T(\xi x_0)$, we have
\[
T_{\xi x_0}(v) = \lambda T_{\xi x_0}(x_0) = \lambda \frac{1}{\xi} T_{\xi x_0}(\xi x_0).
\]
Since $T_{\xi x_0}(\xi x_0) = |\xi|$, this becomes
\begin{equation}\label{pra15}
T_{\xi x_0}(v)
=
\lambda \frac{|\xi|}{\xi}
=
\frac{|\xi| g(\xi x_0)}{g(\xi x_0) + \xi Dg(\xi x_0)x_0}.
\end{equation}
Also
\[
\frac{f(\xi)}{\xi f'(\xi)}
=
\frac{g(\xi x_0)}{g(\xi x_0) + \xi Dg(\xi x_0)x_0}.
\]
Thus
\[
T_{\xi x_0}(v) = |\xi| \frac{f(\xi)}{\xi f'(\xi)}.
\]
Since $|\xi| > 0$ is a positive real number, multiplication by $|\xi|$ does not change the argument. Therefore
\[
\left|
\arg\bigl(T_{\xi x_0}(v)\bigr)
\right|
=
\left|
\arg\left(\frac{f(\xi)}{\xi f'(\xi)}\right)
\right|
=
\left|
\arg\left(\frac{\xi f'(\xi)}{f(\xi)}\right)
\right|.
\]
The last identity holds because $|1/w| = 1/|w|$ and $|\arg(1/w)| = |\arg w|$ for any nonzero complex number $w$. Combining with \eqref{pra14},
\[
\left|\arg\left(\frac{\xi f'(\xi)}{f(\xi)}\right)\right| < \alpha \frac{\pi}{2}.
\]
Hence $f \in \mathcal{S}^*_\alpha$.

Next, write
\[
g(\xi x_0) = 1 + Dg(0)(x_0)\xi + \frac{1}{2!} D^2 g(0)(x_0^2)\xi^2 + \cdots
+ \frac{1}{(n-1)!} D^{n-1} g(0)(x_0^{n-1})\xi^{n-1} + \cdots.
\]
Then
\[
f(\xi) = g(\xi x_0)\xi = \xi + \sum_{n=2}^{\infty} a_n \xi^n,
\]
where
\[
a_n = \frac{1}{(n-1)!} D^{n-1} g(0)(x_0^{n-1}).
\]
By Proposition \ref{pra8} and $T_{x_0}(x_0) = 1$,
\[
A_n = \frac{1}{n!} T_{x_0}\bigl(D^n F(0)(x_0^n)\bigr)
= \frac{1}{(n-1)!} D^{n-1} g(0)(x_0^{n-1}).
\]
This is exactly $a_n$. Hence $A_n = a_n$.
\end{proof}

\begin{cor}\label{pra16}
Let $F \in \mathcal{S}^*_\alpha(\mathbb{B})$ with $F(x) = g(x)x$, and fix $x_0 \in \partial \mathbb{B}$. Let $P$ be any polynomial in finitely many complex variables. Then
\[
P(A_1, A_2, \ldots, A_m) = P(a_1, a_2, \ldots, a_m),
\]
where the directional coefficients $A_n$ are defined by \eqref{pra1} and $a_n$ are the Taylor coefficients of the slice function $f(\xi) = g(\xi x_0)\xi$. Consequently, every known coefficient inequality for the one-variable class $\mathcal{S}^*_\alpha$ transfers to the corresponding directional functional on the scalar-profile class $\mathcal{S}^*_\alpha(\mathbb{B})$, provided the functional involves only the coefficients $a_n$.
\end{cor}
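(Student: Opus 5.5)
This corollary is a direct consequence of Theorem \ref{pra12}, so I would obtain it by substitution rather than by any new analytic argument. Fix $F \in \mathcal{S}^*_\alpha(\mathbb{B})$ with $F(x)=g(x)x$, fix $x_0 \in \partial\mathbb{B}$ and $T_{x_0}\in T(x_0)$, and let $f(\xi)=g(\xi x_0)\xi$ be the slice function.

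First, Theorem \ref{pra12} gives $A_n = a_n$ for every $n\in\mathbb{N}$. For $n=1$ this is the convention $A_1=1=a_1$, which is consistent with the normalization $f'(0)=g(0)=1$ from Remark \ref{pra7}. Hence the two vectors $(A_1,\dots,A_m)$ and $(a_1,\dots,a_m)$ in $\mathbb{C}^m$ coincide. Any function of $m$ complex variables, and in particular any polynomial $P$, therefore takes the same value at both: $P(A_1,\dots,A_m)=P(a_1,\dots,a_m)$.

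For the transfer statement, let $\Phi$ be a functional depending only on $a_1,\dots,a_m$, and suppose $|\Phi(a_1,\dots,a_m)|\le M$ holds for every $h\in\mathcal{S}^*_\alpha$. The first part of Theorem \ref{pra12} shows that the slice function $f$ belongs to $\mathcal{S}^*_\alpha$, so the inequality applies to $f$. The identity above then gives $|\Phi(A_1,\dots,A_m)|\le M$ for $F$. This holds for every direction $x_0$ and every choice of $T_{x_0}$, since $A_n$ does not depend on that choice by Lemma \ref{pra2} and Proposition \ref{pra8}.

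No step is really an obstacle. The one point needing care is that the transfer uses both halves of Theorem \ref{pra12}: membership $f\in\mathcal{S}^*_\alpha$ and the equality of coefficients. Sharpness is not claimed here; that would require the lifting theorem.
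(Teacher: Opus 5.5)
Your proposal is correct and follows the paper's own proof. You substitute $A_n=a_n$ from Theorem \ref{pra12} into $P$, and you apply the one-variable inequality to the slice function $f$, which lies in $\mathcal{S}^*_\alpha$ by that same theorem.
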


\begin{proof}
By Theorem \ref{pra12}, $A_n = a_n$ for every $n \in \mathbb{N}$. Substituting into $P$ gives the first identity. The second statement follows by applying any known one-variable inequality for $\mathcal{S}^*_\alpha$ to the slice function $f \in \mathcal{S}^*_\alpha$.
\end{proof}

\begin{rem}\label{pra17}
Since $v$ lies in the one-dimensional subspace generated by $x_0$, the evaluation does not depend on the extension of $T_{\xi x_0}$. Thus the argument condition is preserved on this subspace.
\end{rem}

The correspondence theorem gives a map from the Banach-space class to the one-variable class. We now prove the converse direction: every one-variable strongly starlike function arises as a slice of some scalar-profile mapping. Thus the slice reduction loses no extremal information.

\begin{prop}[Lifting]\label{pra18}
Let $f \in \mathcal{S}^*_\alpha$, let $u \in \partial \mathbb{B}$, and let $T_u \in T(u)$. Define the analytic function $h : \mathbb{D} \to \mathbb{C}$ by
\[
h(z) = \frac{f(z)}{z} \quad (z \neq 0), \qquad h(0) = 1,
\]
and the auxiliary function
\[
\Phi(z) :=
\begin{cases}
\dfrac{f(z)}{z f'(z)}, & z \neq 0,\\[2mm]
1, & z = 0.
\end{cases}
\]
Then
\[
F_f(x) := h(T_u(x))\, x, \qquad x \in \mathbb{B},
\]
belongs to $\mathcal{S}^*_\alpha(\mathbb{B})$. Moreover, along the direction $x_0 = u$, the directional coefficients of $F_f$ coincide with the Taylor coefficients of $f$.
\end{prop}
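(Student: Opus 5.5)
The plan is to set $g(x) := h(T_u(x))$, so that $F_f(x) = g(x)x$ is of scalar-profile form, and to check the requirements of Definition \ref{pra3} one at a time: holomorphy and normalization, the nonvanishing condition \eqref{pra4}, local biholomorphy, and the sector condition \eqref{pra5}. After that, Theorem \ref{pra12} will give the statement about the coefficients.

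\emph{Holomorphy, normalization and \eqref{pra4}.} Since $f \in \mathcal{S}^*_\alpha$ is starlike and hence univalent with $f(0)=0$, we have $f(z) \neq 0$ for $z \neq 0$. So $h$ is analytic and zero-free on $\mathbb{D}$. Because $|T_u(x)| \le \|x\| < 1$, the map $g = h \circ T_u$ is holomorphic on $\mathbb{B}$ and never vanishes, and $g(0) = h(0) = 1$. Writing $t := T_u(x)$, the chain rule gives
\[
DF_f(x)v = h(t)v + h'(t)T_u(v)\,x, \qquad Dg(x)x = t\,h'(t).
\]
In particular $DF_f(0) = I$. Moreover $g(x) + Dg(x)x = h(t) + t h'(t) = f'(t)$, an identity that also holds at $t = 0$.

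\emph{Local biholomorphy.} This is the step I expect to need the most care. The operator $DF_f(x) = h(t)\bigl(I + \tfrac{h'(t)}{h(t)}\, x \otimes T_u\bigr)$ is a nonzero multiple of a rank-one perturbation of the identity. By the Sherman--Morrison formula it has a bounded inverse exactly when $1 + h'(t)T_u(x)/h(t) \neq 0$, that is, when $f'(t)/h(t) \neq 0$. This holds because $f$ is univalent, so $f' \neq 0$ on $\mathbb{D}$. The inverse can be written explicitly as
\[
(DF_f(x))^{-1}w = \frac{1}{h(t)}\Bigl(w - \frac{h'(t)T_u(w)}{f'(t)}\,x\Bigr),
\]
which is a bounded operator.

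\emph{Sector condition.} Remark \ref{pra6} now applies and gives $(DF_f(x))^{-1}F_f(x) = \lambda(x)x$ with
\[
\lambda(x) = \frac{g(x)}{g(x) + Dg(x)x} = \frac{h(t)}{f'(t)} = \Phi(t).
\]
Hence for every $T_x \in T(x)$ we get $T_x\bigl((DF_f(x))^{-1}F_f(x)\bigr) = \Phi(t)\|x\|$, whose argument equals $\arg \Phi(t)$. The point needing care is that $x \neq 0$ does not force $t \neq 0$; this is exactly why $\Phi$ was defined at $0$.
\begin{itemize}
\item If $t \neq 0$, then $|\arg \Phi(t)| = |\arg(tf'(t)/f(t))| < \alpha\pi/2$ because $f \in \mathcal{S}^*_\alpha$.
\item If $t = 0$, then $\Phi(0) = 1$, whose argument is $0 < \alpha\pi/2$.
\end{itemize}
Therefore $F_f \in \mathcal{S}^*_\alpha(\mathbb{B})$.

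\emph{Coefficients.} Take $x_0 = u$, so that $T_u(\xi u) = \xi$. The slice function is then $g(\xi u)\xi = h(\xi)\xi = f(\xi)$. Theorem \ref{pra12} gives $A_n = a_n$, the Taylor coefficients of $f$. The definition of the slice is independent of the choice of norming functional, as noted before Proposition \ref{pra8}.
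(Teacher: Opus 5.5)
Your proposal is correct and follows essentially the same route as the paper. You write $DF_f(x)$ as a rank-one perturbation of $h(t)I$ and invert it via Sherman--Morrison using $h(t)+th'(t)=f'(t)$. You then obtain $(DF_f(x))^{-1}F_f(x)=\Phi(t)\,x$, treat the case $t=T_u(x)=0$ separately, and get the coefficient statement from Theorem \ref{pra12}, since the slice along $u$ is $f$ itself.

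The differences are minor. You deduce $f(z)\neq 0$ and $f'(z)\neq 0$ from univalence of starlike functions, which is slightly cleaner than the paper's appeal to $\operatorname{Re}(zf'(z)/f(z))>0$. You also read off $(DF_f(x))^{-1}F_f(x)$ from the eigenvalue computation in Remark \ref{pra6}, rather than applying the explicit inverse to $F_f(x)$.
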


\begin{proof}
Since $f$ is analytic on $\mathbb{D}$ with $f(0) = 0$ and $f'(0) = 1$, the function $h$ extends to an analytic function on $\mathbb{D}$ with $h(0) = 1$. For $x \in \mathbb{B}$, we have $|T_u(x)| \leq \|T_u\|\,\|x\| = \|x\| < 1$, so $T_u(x) \in \mathbb{D}$ and the composition $g_f := h \circ T_u$ is well defined and holomorphic on $\mathbb{B}$. Thus $F_f = g_f \cdot \mathrm{Id}$ is holomorphic on $\mathbb{B}$ with $F_f(0) = 0$ and $DF_f(0) = g_f(0)I = I$. Hence $F_f$ is normalized, and by Remark \ref{pra7} we have $g_f(0) = 1$.

We claim $g_f(x) \neq 0$ for every $x \in \mathbb{B}$. If $T_u(x) \neq 0$, then $g_f(x) = f(T_u(x))/T_u(x)$. Since $f \in \mathcal{S}^*_\alpha$, the sector condition
\[
\left|\arg\frac{z f'(z)}{f(z)}\right| < \frac{\alpha\pi}{2} \leq \frac{\pi}{2}
\]
for $z \neq 0$ implies $\operatorname{Re}\bigl(z f'(z)/f(z)\bigr) > 0$. In particular, $z f'(z)/f(z)$ is a nonzero complex number, so $f(z) \neq 0$ for $z \neq 0$. Hence $g_f(x) \neq 0$ whenever $T_u(x) \neq 0$. If $T_u(x) = 0$, then $g_f(x) = h(0) = 1 \neq 0$. Thus $g_f$ is nonvanishing, and condition \eqref{pra4} holds.

Next we compute the Fr\'echet derivative of $F_f$. For $x \in \mathbb{B}$ and $v \in X$,
\[
DF_f(x)v = g_f(x)v + [Dg_f(x)v]\,x.
\]
Since $g_f = h \circ T_u$ and $T_u$ is linear, $Dg_f(x)v = h'(T_u(x))\,T_u(v)$. Writing $z = T_u(x)$, we obtain
\[
DF_f(x) = g_f(x)I + h'(z)\,(x \otimes T_u),
\]
where $(x \otimes T_u)(v) := T_u(v)\,x$ denotes the rank-one operator determined by $x$ and $T_u$.

This is a bounded linear operator on $X$ of the form $A + u_0 \otimes \phi$, with $A = g_f(x)I$, $u_0 = h'(z)x$, and $\phi = T_u$. By the Sherman--Morrison formula for rank-one perturbations,
\[
(A + u_0 \otimes \phi)^{-1}
=
A^{-1} - \frac{A^{-1}u_0 \otimes \phi A^{-1}}{1 + \phi(A^{-1}u_0)},
\]
provided the denominator is nonzero. Here
\[
A^{-1}u_0 = \frac{h'(z)}{g_f(x)}x,
\qquad
\phi(A^{-1}u_0) = T_u\!\left(\frac{h'(z)}{g_f(x)}x\right) = \frac{h'(z)z}{g_f(x)},
\]
so
\[
1 + \phi(A^{-1}u_0) = \frac{g_f(x) + h'(z)z}{g_f(x)}.
\]
Since $g_f(x) = h(z)$ and $z = T_u(x)$, we have
\[
g_f(x) + h'(z)z = h(z) + h'(z)z = \frac{d}{dz}\bigl(zh(z)\bigr) = f'(z).
\]
Thus
\[
1 + \phi(A^{-1}u_0) = \frac{f'(z)}{g_f(x)}.
\]
Since $f \in \mathcal{S}^*_\alpha$ and $z \in \mathbb{D}$, the sector condition gives $\operatorname{Re}\bigl(z f'(z)/f(z)\bigr) > 0$, so $f'(z) \neq 0$. Together with $g_f(x) \neq 0$, the denominator is nonzero, and $DF_f(x)$ is invertible for every $x \in \mathbb{B}$. Hence $F_f$ is locally biholomorphic.

It remains to verify the sector condition. Using the Sherman--Morrison formula and $F_f(x) = g_f(x)x$,
\[
(DF_f(x))^{-1}F_f(x)
=
g_f(x)\left[\frac{1}{g_f(x)}I - \frac{h'(z)}{g_f(x)^2(1 + h'(z)z/g_f(x))}(x \otimes T_u)\right]x.
\]
Since $(x \otimes T_u)(x) = T_u(x)x = zx$, this simplifies to
\[
(DF_f(x))^{-1}F_f(x)
=
x - \frac{h'(z)z}{g_f(x)(1 + h'(z)z/g_f(x))}x
=
x \cdot \frac{g_f(x)}{g_f(x) + h'(z)z}
=
x \cdot \frac{h(z)}{f'(z)}
=
\Phi(z)\,x,
\]
where we have used $g_f(x) = h(z)$ and $g_f(x) + h'(z)z = f'(z)$, together with the definition of $\Phi$. When $z = 0$, the formula reduces to $1 \cdot x$, consistent with $\Phi(0) = 1$.

Applying any $T_x \in T(x)$,
\[
T_x\bigl((DF_f(x))^{-1}F_f(x)\bigr)
=
\Phi(z)\,T_x(x)
=
\Phi(z)\,\|x\|.
\]
Since $\|x\| > 0$ for $x \neq 0$, taking arguments yields
\[
\left|\arg T_x\bigl((DF_f(x))^{-1}F_f(x)\bigr)\right|
=
\left|\arg \Phi(z)\right|
=
\left|\arg\frac{f(z)}{z f'(z)}\right|
=
\left|\arg\frac{z f'(z)}{f(z)}\right|
< \frac{\alpha\pi}{2},
\]
for $z \neq 0$, where the last inequality uses $f \in \mathcal{S}^*_\alpha$ and $z = T_u(x) \in \mathbb{D}$. When $z = 0$, the left-hand side is $|\arg 1| = 0 < \alpha\pi/2$. Hence $F_f$ satisfies the sector condition and belongs to $\mathcal{S}^*_\alpha(\mathbb{B})$.

Finally, along $x_0 = u$, we have $T_u(u) = \|u\| = 1$ and $g_f(\xi u) = h(\xi) = f(\xi)/\xi$ for $\xi \neq 0$, with $g_f(0) = 1$. Thus
\[
F_f(\xi u) = g_f(\xi u)\xi u = f(\xi)u.
\]
The slice function is $T_u(F_f(\xi u)) = f(\xi)T_u(u) = f(\xi)$. By Theorem \ref{pra12}, the directional coefficients of $F_f$ along $u$ equal the Taylor coefficients of $f$.
\end{proof}

\begin{cor}\label{pra19}
Let $P$ be any polynomial in finitely many complex variables. Then
\[
\sup_{\substack{F \in \mathcal{S}^*_\alpha(\mathbb{B}),\\ x_0 \in \partial \mathbb{B},\ T_{x_0} \in T(x_0)}}
|P(A_1, \ldots, A_m)|
=
\sup_{f \in \mathcal{S}^*_\alpha} |P(a_1, \ldots, a_m)|,
\]
where the directional coefficients $A_n$ are computed for each $F$ along the direction $x_0$ with respect to the functional $T_{x_0}$, and $a_n$ denote the Taylor coefficients of $f$.
\end{cor}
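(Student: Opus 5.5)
The proof will establish the two inequalities between the suprema separately, using Theorem \ref{pra12} (equivalently Corollary \ref{pra16}) for one direction and the Lifting Proposition \ref{pra18} for the other. Denote the left-hand supremum by $M_{\mathbb{B}}$ and the right-hand one by $M_1$. Both are suprema of nonnegative quantities, so they take values in $[0,\infty]$. The argument must therefore be phrased so that it remains valid when one of them is infinite; for $P$ a polynomial both are in fact finite by classical coefficient bounds, but I will not need that.

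For $M_{\mathbb{B}} \le M_1$, fix an admissible triple $(F, x_0, T_{x_0})$ with $F(x)=g(x)x \in \mathcal{S}^*_\alpha(\mathbb{B})$. Form the slice function $f(\xi)=g(\xi x_0)\xi$. Theorem \ref{pra12} gives $f\in\mathcal{S}^*_\alpha$ and $A_n=a_n$ for all $n$. Here I note that the directional coefficients defined by \eqref{pra1} do not depend on the choice of $T_{x_0}$: by Proposition \ref{pra8} the vector $D^nF(0)(x_0^n)$ lies in $\operatorname{span}\{x_0\}$, so Lemma \ref{pra2} applies. Hence $|P(A_1,\dots,A_m)| = |P(a_1,\dots,a_m)| \le M_1$. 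Taking the supremum over all triples gives the inequality.

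For $M_1 \le M_{\mathbb{B}}$, fix $f\in\mathcal{S}^*_\alpha$. The space $X$ is nonzero (otherwise $\partial\mathbb{B}$ is empty, and I will tacitly assume $X\neq\{0\}$, as the paper does), so $\partial\mathbb{B}$ contains some $u$, and $T(u)$ is nonempty by Hahn--Banach; pick $T_u\in T(u)$. Proposition \ref{pra18} produces $F_f\in\mathcal{S}^*_\alpha(\mathbb{B})$ whose directional coefficients along $x_0=u$ with respect to $T_u$ coincide with the $a_n$. Thus $|P(a_1,\dots,a_m)|$ is attained by an admissible triple and is bounded by $M_{\mathbb{B}}$. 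Taking the supremum over $f$ finishes the argument. As a by-product, any extremal function of the one-variable problem lifts via $F_f$ to an extremal mapping, which is the sharpness-transfer statement.

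There is essentially no obstacle. The only delicate points are bookkeeping: checking that the functional used in the lift is admissible in the left supremum, which holds because the supremum ranges over all $T_{x_0}\in T(x_0)$, and making sure the argument is insensitive to infinite suprema. The nonemptiness of $\partial\mathbb{B}$ is the one hypothesis I would state explicitly.
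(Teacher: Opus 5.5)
Your proposal is correct and follows the paper's proof: the inequality $\le$ comes from Theorem \ref{pra12} (via Corollary \ref{pra16}) applied to the slice function, and $\ge$ comes from lifting an arbitrary $f\in\mathcal{S}^*_\alpha$ along some $u\in\partial\mathbb{B}$ with Proposition \ref{pra18}. Your extra bookkeeping is sound and only makes explicit what the paper leaves implicit: that $A_n$ does not depend on the choice of $T_{x_0}$, that the suprema may be infinite, and that $X\neq\{0\}$ is tacitly assumed.
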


\begin{proof}
For the inequality $\leq$: given $F \in \mathcal{S}^*_\alpha(\mathbb{B})$, a direction $x_0 \in \partial \mathbb{B}$, and $T_{x_0} \in T(x_0)$, the slice function $f$ belongs to $\mathcal{S}^*_\alpha$ by Theorem \ref{pra12}, and $P(A_1,\ldots,A_m) = P(a_1,\ldots,a_m)$ by Corollary \ref{pra16}. Hence
\[
|P(A_1,\ldots,A_m)| \leq \sup_{f \in \mathcal{S}^*_\alpha} |P(a_1,\ldots,a_m)|.
\]
Taking the supremum over $F$, $x_0$, and $T_{x_0}$ gives the desired bound.

For the inequality $\geq$: given $f \in \mathcal{S}^*_\alpha$, choose any $u \in \partial \mathbb{B}$ and $T_u \in T(u)$. By Proposition \ref{pra18}, the lifting $F_f$ belongs to $\mathcal{S}^*_\alpha(\mathbb{B})$ and has directional coefficients along $u$ equal to the Taylor coefficients of $f$. Hence
\[
|P(a_1,\ldots,a_m)| = |P(A_1,\ldots,A_m)| \leq \sup_{\substack{F, x_0, T_{x_0}}} |P(A_1,\ldots,A_m)|.
\]
Taking the supremum over $f$ gives the reverse inequality.
\end{proof}

\begin{rem}\label{pra20}
By Corollary \ref{pra19}, every sharp one-variable inequality for $\mathcal{S}^*_\alpha$ remains sharp in the scalar-profile Banach-space class, and every extremal function in the one-variable theory admits a Banach-space extremal given explicitly by the lifting $F_f$. The sharpness statements in the following sections can be justified by applying Corollary \ref{pra19}.
\end{rem}

\section{One-Variable Ingredients}\label{pra21}

We collect below the one-variable results that will be transferred to the Banach-space setting. Throughout, $\mathcal{P}$ denotes the Carath\'eodory class of analytic functions $p(z) = 1 + c_1 z + c_2 z^2 + \cdots$ on $\mathbb{D}$ with $\operatorname{Re} p(z) > 0$.

\begin{lem}[Ma--Minda \cite{ma1992}]\label{pra22}
If $p \in \mathcal{P}$ with $p(z) = 1 + c_1 z + c_2 z^2 + \cdots$, then for any $\lambda \in \mathbb{C}$,
\[
|c_2 - \lambda c_1^2| \le 2\max\{1, |2\lambda - 1|\}.
\]
\end{lem}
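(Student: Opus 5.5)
The plan is to reduce the inequality to a bound on a single Carathéodory function, via the standard Herglotz-type representation of $\mathcal{P}$. The natural setting is the case $\lambda$ real, which is the classical form of the Ma--Minda lemma; for complex $\lambda$ the right-hand side $2\max\{1,|2\lambda-1|\}$ should follow by the same method. I describe the argument for general $\lambda\in\mathbb{C}$, and the real case is contained in it.

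\textbf{Step 1 (Schur reduction).} Write $p=(1+w)/(1-w)$ with $w$ a Schwarz function, $w(z)=b_1z+b_2z^2+\cdots$. Comparing coefficients in $p(1-w)=1+w$ gives
\[
c_1=2b_1,\qquad c_2=2(b_2+b_1^2).
\]
Hence
\[
c_2-\lambda c_1^2 = 2\bigl(b_2+(1-2\lambda)b_1^2\bigr).
\]

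\textbf{Step 2 (Schwarz coefficient bounds).} The key input is the classical estimate $|b_2|\le 1-|b_1|^2$, with $|b_1|\le 1$. This is obtained by applying the Schwarz lemma to $w(z)/z$, composed with the disk automorphism sending $b_1$ to $0$.

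\textbf{Step 3 (optimization).} Put $\mu=1-2\lambda$ and $t=|b_1|^2\in[0,1]$. Then
\[
|b_2+\mu b_1^2|\le 1-t+|\mu|\,t = 1+(|\mu|-1)t .
\]
This is linear in $t$, so its maximum over $[0,1]$ equals $\max\{1,|\mu|\}$. Multiplying by $2$ yields
\[
|c_2-\lambda c_1^2|\le 2\max\{1,|2\lambda-1|\}.
\]

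\textbf{Main obstacle.} The only nontrivial step is Step 2, the inequality $|b_2|\le 1-|b_1|^2$. I would prove it by observing that $\phi(z)=w(z)/z$ maps $\mathbb{D}$ into $\overline{\mathbb{D}}$ and satisfies $\phi(0)=b_1$ and $\phi'(0)=b_2$. The Schwarz--Pick inequality at $0$ then gives
\[
|\phi'(0)|\le 1-|\phi(0)|^2 .
\]
The degenerate case $|b_1|=1$ is handled separately: there $\phi$ is constant, so $b_2=0$ and the inequality holds trivially.
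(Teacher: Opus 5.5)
The paper gives no proof of this lemma; it simply quotes it from Ma--Minda \cite{ma1992}, so there is no internal argument to compare yours against. Your proof is correct and complete, and it is the standard argument for this estimate. You set $w=(p-1)/(p+1)$, which gives $c_2-\lambda c_1^2=2\bigl(b_2+(1-2\lambda)b_1^2\bigr)$. Then $|b_2|\le 1-|b_1|^2$ and the triangle inequality yield the bound $2\bigl(1+(|2\lambda-1|-1)|b_1|^2\bigr)\le 2\max\{1,|2\lambda-1|\}$.

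Two small points of presentation.
\begin{itemize}
\item What you actually use in Step 1 is not the Herglotz integral representation but the Cayley-transform correspondence. You should state why $w$ is a Schwarz function: $w(0)=0$, and $\operatorname{Re}p>0$ gives $|p-1|<|p+1|$, hence $|w|<1$.
\item Nothing in Step 3 uses that $\lambda$ is real, since the triangle inequality works for complex $\mu=1-2\lambda$. So the hedge in your opening paragraph is unnecessary: the argument proves the lemma for every $\lambda\in\mathbb{C}$, exactly as stated.
\end{itemize}
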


\begin{lem}[Cho et al.\ \cite{cho2017}, Theorem 2.1]\label{pra23}
Let $f \in \mathcal{S}^*_\alpha$ with $0 < \alpha \le 1$. Then
\[
|a_2 a_3 - a_4| \le
\begin{cases}
\dfrac{2}{3}\alpha, & 0 < \alpha \le \alpha_0,\\[4mm]
\dfrac{2\alpha(2\alpha + 1)}{9}
\sqrt{\dfrac{2(2\alpha + 1)}{(1-\alpha)(5\alpha + 2)}}, &
\alpha_0 < \alpha < \dfrac{2 + \sqrt{34}}{10},\\[5mm]
\dfrac{2}{9}\alpha(10\alpha^2 - 1), &
\dfrac{2 + \sqrt{34}}{10} \leq \alpha \leq 1,
\end{cases}
\]
where $\alpha_0 \approx 0.559376$ is the unique root in $(0,1)$ of
\[
16\alpha^3 + 69\alpha^2 - 15\alpha - 16 = 0.
\]
All three branches are sharp.
\end{lem}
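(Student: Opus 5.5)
Since this is a one-variable statement quoted from \cite{cho2017}, I would prove it by the standard Carath\'eodory-class method. For $f\in\mathcal{S}^*_\alpha$, the sector condition is equivalent to
\[
\frac{zf'(z)}{f(z)} = p(z)^\alpha
\]
for some $p\in\mathcal{P}$, with the principal branch.

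\textbf{Step 1: express the coefficients through $c_1,c_2,c_3$.} Writing $p^\alpha = 1+b_1z+b_2z^2+b_3z^3+\cdots$, the binomial expansion gives
\[
b_1=\alpha c_1,\qquad b_2=\alpha c_2+\tfrac{\alpha(\alpha-1)}{2}c_1^2,\qquad b_3=\alpha c_3+\alpha(\alpha-1)c_1c_2+\tfrac{\alpha(\alpha-1)(\alpha-2)}{6}c_1^3 .
\]
Comparing coefficients in $zf'=f\,p^\alpha$ gives
\[
a_2=b_1,\qquad 2a_3=b_2+a_2b_1,\qquad 3a_4=b_3+a_2b_2+a_3b_1 .
\]
Substituting, $a_2a_3-a_4$ becomes an explicit combination
\[
\alpha\bigl(\beta_1 c_3+\beta_2 c_1c_2+\beta_3 c_1^3\bigr),
\]
with coefficients $\beta_j$ polynomial in $\alpha$.

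\textbf{Step 2: parametrize and reduce to a real problem.} I would use the Libera--Z{\l}otkiewicz/Cho--Kowalczyk--Lecko parametrization of $(c_1,c_2,c_3)$ by $\zeta_1,\zeta_2,\zeta_3\in\overline{\mathbb{D}}$:
\[
c_1=2\zeta_1,\qquad c_2=2\zeta_1^2+2(1-|\zeta_1|^2)\zeta_2,
\]
\[
c_3=2\zeta_1^3+4(1-|\zeta_1|^2)\zeta_1\zeta_2-2(1-|\zeta_1|^2)\overline{\zeta_1}\zeta_2^2+2(1-|\zeta_1|^2)(1-|\zeta_2|^2)\zeta_3 .
\]
By rotation invariance of $\mathcal{S}^*_\alpha$ (the functional is homogeneous of degree $3$ under $f(z)\mapsto e^{-i\theta}f(e^{i\theta}z)$), I may take $\zeta_1=x\in[0,1]$. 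The functional then has the form
\[
\alpha(1-x^2)\Bigl[A(x)+B(x)\zeta_2+C(x)\zeta_2^2+\tfrac{2}{3}(1-|\zeta_2|^2)\zeta_3\Bigr]
\]
plus a pure $x^3$ term. The edge cases $x=0$ and $x=1$ are immediate. For $x=0$ the value reduces to $\tfrac{2}{3}\alpha|\zeta_3|\le \tfrac23\alpha$, which is the first branch. For $x=1$ the value is $\tfrac{2}{9}\alpha|10\alpha^2-1|$, which is the third branch.

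For $0<x<1$ I use $|\zeta_3|\le1$ and bound
\[
|A+B\zeta_2+C\zeta_2^2|+\tfrac23(1-|\zeta_2|^2)
\]
with the Choi--Kim--Sugawa lemma on $\max_{|\zeta|\le1}\bigl(|A+B\zeta+C\zeta^2|+1-|\zeta|^2\bigr)$. Alternatively, I would use the triangle inequality, reducing to a function $\Psi(x,y)$ with $y=|\zeta_2|\in[0,1]$. Maximizing $\Psi$ first in $y$, then in $x$, produces an interior critical point. That point is admissible exactly when $\alpha_0<\alpha<(2+\sqrt{34})/10$, and it gives the middle value
\[
\tfrac{2\alpha(2\alpha+1)}{9}\sqrt{\tfrac{2(2\alpha+1)}{(1-\alpha)(5\alpha+2)}} .
\]
The threshold $\alpha_0$ is where this interior value equals $\tfrac23\alpha$; squaring that equation yields the cubic $16\alpha^3+69\alpha^2-15\alpha-16=0$. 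The threshold $(2+\sqrt{34})/10$ is where the critical point reaches $x=1$, i.e.\ the root of $10\alpha^2-4\alpha-3=0$.

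\textbf{Step 3: sharpness.} The first branch is attained by $zf'/f=\bigl((1+z^3)/(1-z^3)\bigr)^\alpha$, for which $a_2=a_3=0$ and $|a_4|=\tfrac23\alpha$. The third branch is attained by $zf'/f=\bigl((1+z)/(1-z)\bigr)^\alpha$. The middle branch is attained by the $p$ corresponding to the optimal $x$ with $\zeta_2$ and $\zeta_3$ unimodular of suitable argument, i.e.\ a two-point measure.

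I expect the main obstacle to be Step 2. This means the careful case analysis of the two-variable maximization: showing the triangle-inequality bound is attained, locating the interior critical point, and verifying that it dominates the boundary values exactly on the stated interval. The rest is routine algebra.
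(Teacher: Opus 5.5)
The paper gives no proof of this lemma. It is quoted from \cite{cho2017} and used only as a black box in Theorem~\ref{pra31}, so there is no in-paper argument to compare against. Judged on its own, your outline is the standard Carath\'eodory-parametrization argument, and it does go through. Carrying out Step~1 gives
\[
a_2a_3-a_4=-\frac{\alpha}{3}\Bigl(c_3-(1-\alpha)c_1c_2-\frac{(5\alpha-2)(\alpha+1)}{6}\,c_1^3\Bigr).
\]
With $\zeta_1=x\in[0,1]$ and $\zeta_2=\zeta$ this becomes
\[
|a_2a_3-a_4|=\frac{2\alpha}{3}\Bigl|\frac{1-10\alpha^2}{3}\,x^3+(1-x^2)\bigl(2\alpha x\zeta-x\zeta^2+(1-|\zeta|^2)\zeta_3\bigr)\Bigr|\le\frac{2\alpha}{3}\,\Psi(x,|\zeta|),
\]
where $\Psi(x,y)=\frac{|1-10\alpha^2|}{3}x^3+(1-x^2)\bigl(1+2\alpha xy-(1-x)y^2\bigr)$. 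This confirms your edge values. One small correction: at $x=0$ the value is $\frac23\alpha(1-|\zeta_2|^2)|\zeta_3|$, not $\frac23\alpha|\zeta_3|$, though the bound is the same.

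One claim in Step~2 is wrong as stated. Maximizing $\Psi$ in $y$, the optimal $y$ is $1$ exactly when $x\ge 1/(1+\alpha)$. On that range, for $\alpha\ge1/\sqrt{10}$, one gets
\[
\Phi(x)=(2\alpha+1)x-\tfrac23(5\alpha+2)(1-\alpha)x^3,\qquad x_c^2=\frac{2\alpha+1}{2(5\alpha+2)(1-\alpha)}.
\]
$\Phi$ is concave, and $\frac{2\alpha}{3}\Phi(x_c)$ is exactly the middle value. You are right that $x_c\le1$ iff $10\alpha^2-4\alpha-3\le0$. However, $x_c\ge1/(1+\alpha)$ iff $(2\alpha-1)(\alpha^2+8\alpha+3)\ge0$, i.e.\ iff $\alpha\ge\frac12$. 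So the critical point is admissible on $[\frac12,\frac{2+\sqrt{34}}{10}]$, not ``exactly'' on $(\alpha_0,\frac{2+\sqrt{34}}{10})$. On $[\frac12,\alpha_0]$ it is admissible but $\Phi(x_c)\le1$, so it loses to the $x=0$ value. The comparison $\Phi(x_c)=1$, your next sentence, is what produces the cubic.

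To close the case analysis you need three more facts.
\begin{enumerate}
\item For $\alpha\ge1/\sqrt{10}$ and $x<1/(1+\alpha)$, one has $\max_y\Psi=1-(1-\alpha^2)x^2+\frac{13\alpha^2-1}{3}x^3$. This has only an interior local minimum, so only its endpoint values $1$ and $\Phi(1/(1+\alpha))\le\Phi(x_c)$ matter. The overall maximum is therefore $\max\{1,\Phi(\min\{x_c,1\})\}$, which yields all three branches.
\item For $\alpha<1/\sqrt{10}$, where $1-10\alpha^2>0$ and the triangle inequality is no longer sharp, $\max_y\Psi$ is decreasing in $x$ on both pieces. The bound $\frac23\alpha$ is then attained at $x=0$.
\item For sharpness of the middle branch, the maximizing configuration is $\zeta_1=x_c$, $\zeta_2=-1$, so $\zeta_3$ plays no role. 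This is $p(z)=\frac{1-z^2}{1-2x_cz+z^2}$.
\end{enumerate}
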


\begin{lem}[Cho et al.\ \cite{cho2017}, Theorem 2.3]\label{pra24}
Let $f \in \mathcal{S}^*_\alpha$ with $0 < \alpha \le 1$. Then
\[
|a_2 a_4 - a_3^2| \le \alpha^2.
\]
The bound is sharp.
\end{lem}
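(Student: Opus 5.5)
The plan is to reduce the statement to a coefficient problem in the Carathéodory class $\mathcal{P}$. For $f \in \mathcal{S}^*_\alpha$, the sector condition says that $zf'(z)/f(z)$ is subordinate to $\bigl((1+z)/(1-z)\bigr)^\alpha$. Equivalently,
\[
\frac{zf'(z)}{f(z)} = p(z)^\alpha, \qquad p(z) = 1 + c_1 z + c_2 z^2 + c_3 z^3 + \cdots \in \mathcal{P},
\]
with the principal branch of the power. I would first expand $p^\alpha$ to third order:
\[
p^\alpha = 1 + \alpha c_1 z + \Bigl(\alpha c_2 + \tfrac{\alpha(\alpha-1)}{2}c_1^2\Bigr)z^2 + \Bigl(\alpha c_3 + \alpha(\alpha-1)c_1c_2 + \tfrac{\alpha(\alpha-1)(\alpha-2)}{6}c_1^3\Bigr)z^3 + \cdots.
\]
Comparing coefficients in $zf' = f\,p^\alpha$ then gives $a_2,a_3,a_4$ as polynomials in $c_1,c_2,c_3$. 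The first two are
\[
a_2 = \alpha c_1, \qquad a_3 = \tfrac{\alpha}{2}c_2 + \tfrac{3\alpha^2-\alpha}{4}c_1^2,
\]
and $a_4$ is a similar cubic expression.

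Next I would form $a_2a_4 - a_3^2$ and write it as a combination of $c_1^4$, $c_1^2c_2$, $c_1c_3$ and $c_2^2$. The $\alpha^2$ scale comes out immediately from $a_2 = \alpha c_1$ and the leading parts of $a_3$ and $a_4$. Using the rotation invariance of $\mathcal{P}$, I may assume $c_1 = c \in [0,2]$. I would then substitute the Libera--Złotkiewicz representation
\[
2c_2 = c^2 + \zeta(4-c^2),
\]
together with the corresponding formula for $4c_3$ in terms of $c$, $\zeta$ and $\eta$, where $|\zeta|\le 1$ and $|\eta|\le 1$.

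The triangle inequality then bounds $|a_2a_4 - a_3^2|$ by a function $\Psi(c,|\zeta|)$. I would show that
\[
\max_{c\in[0,2],\ |\zeta|\le 1} \Psi(c,|\zeta|) = \alpha^2,
\]
with the maximum attained at $c = 0$ and $|\zeta| = 1$. At $c = 0$ the bound reduces to $\tfrac{\alpha^2}{4}|c_2|^2 \le \alpha^2$, since $|c_2| \le 2$. This identifies the extremal case $p(z) = (1+z^2)/(1-z^2)$, that is, $f$ defined by $zf'/f = \bigl((1+z^2)/(1-z^2)\bigr)^\alpha$. For this $f$ we have $a_2 = 0$ and $a_3 = \alpha$, so $|a_3^2| = \alpha^2$, which gives sharpness.

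The main obstacle is the two-variable maximization of $\Psi$ uniformly in $\alpha \in (0,1]$. The $c^4$ coefficient and the mixed terms carry signs that depend on $\alpha$, so $\Psi$ could in principle exceed $\alpha^2$ at interior points or at $c = 2$. I would first check the endpoint $c = 2$, where $p$ is extremal with $c_2 = c_3 = 2$ and $f$ is the Koebe-type function. Then I would show that $\partial\Psi/\partial|\zeta| \ge 0$, so that $|\zeta| = 1$, and that the resulting one-variable function of $c^2$ is a quadratic whose maximum on $[0,4]$ occurs at $c = 0$. If this crude triangle-inequality estimate is too lossy, I would instead regroup the terms as $c_3 - \mu c_1c_2$ and use the known sharp bound for $|c_3 - 2\mu c_1 c_2 + \nu c_1^3|$ over $\mathcal{P}$, as in Cho et al.
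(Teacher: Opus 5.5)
Your proposal is correct. The paper gives no proof of this lemma; it imports the result from Cho et al.\ \cite{cho2017} and uses it as a black box in Theorem~\ref{pra34}. Your argument is therefore a self-contained replacement for that citation, along the standard Carath\'eodory/Libera--Z\l{}otkiewicz lines.

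Carrying out your plan with $c_1=c\in[0,2]$, the expansion works out to
\[
\frac{144}{\alpha^2}\,(a_2a_4-a_3^2)=(4-13\alpha^2)c^4+6\alpha c^2(4-c^2)\zeta-3(4-c^2)(c^2+12)\zeta^2+24c(4-c^2)(1-|\zeta|^2)\eta .
\]
Your two remaining steps both go through:
\begin{enumerate}[(i)]
\item Put $y=|\zeta|$ and let $\Psi$ be the triangle-inequality majorant. Then
\[
\frac{\partial\Psi}{\partial y}=6(4-c^2)\bigl[\alpha c^2+(2-c)(6-c)\,y\bigr]\ge 0
\]
on $[0,2]\times[0,1]$, so the maximum is at $y=1$.
\item At $y=1$, with $t=c^2\in[0,4]$,
\[
\Psi=144+t\bigl[(|4-13\alpha^2|-6\alpha-3)\,t-24(1-\alpha)\bigr].
\]
The bracket is linear in $t$. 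It equals $-24(1-\alpha)\le 0$ at $t=0$ and $4|4-13\alpha^2|-36\le 0$ at $t=4$, so $\Psi\le 144$.
\end{enumerate}
Hence the crude triangle-inequality estimate already gives the sharp bound $\alpha^2$. Your fallback to the sharp estimate for $|c_3-2\mu c_1c_2+\nu c_1^3|$ over $\mathcal{P}$ is not needed.

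The endpoint $c=2$ gives $\frac{\alpha^2}{9}|4-13\alpha^2|$. This reaches $\alpha^2$ only at $\alpha=1$, where in fact $\Psi\equiv 144$ along $y=1$. Your extremal choice $p(z)=(1+z^2)/(1-z^2)$ gives $a_2=a_4=0$ and $a_3=\alpha$. This is exactly the symmetric mapping the paper uses for sharpness in Theorem~\ref{pra34}.
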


\begin{lem}[Cho et al.\ \cite{cho2017}, Theorem 2.5]\label{pra25}
Let $f \in \mathcal{S}^*_\alpha$ with $0 < \alpha \le 1$. Then
\[
|a_3 - a_2^2| \le \alpha.
\]
The bound is sharp.
\end{lem}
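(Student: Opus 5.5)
The plan is to use the standard Carath\'eodory parametrization of $\mathcal{S}^*_\alpha$ and then apply Lemma \ref{pra22}. If $f \in \mathcal{S}^*_\alpha$, the function $z f'(z)/f(z)$ takes values in the sector $|\arg w| < \alpha\pi/2$. Hence there is $p \in \mathcal{P}$ with
\[
\frac{z f'(z)}{f(z)} = p(z)^\alpha,
\]
using the principal branch, and $p(z) = 1 + c_1 z + c_2 z^2 + \cdots$.

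\textbf{Step 1: coefficients of both sides.} On the left,
\[
\frac{z f'(z)}{f(z)} = 1 + a_2 z + (2a_3 - a_2^2) z^2 + \cdots.
\]
On the right,
\[
p^\alpha = 1 + \alpha c_1 z + \Bigl(\alpha c_2 + \tfrac{\alpha(\alpha-1)}{2} c_1^2\Bigr) z^2 + \cdots.
\]
Comparing coefficients gives $a_2 = \alpha c_1$. It also gives
\[
a_3 = \frac{\alpha}{2}\Bigl(c_2 + \frac{3\alpha-1}{2} c_1^2\Bigr).
\]
Subtracting $a_2^2 = \alpha^2 c_1^2$ yields
\[
a_3 - a_2^2 = \frac{\alpha}{2}\Bigl(c_2 - \frac{1+\alpha}{2} c_1^2\Bigr).
\]

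\textbf{Step 2: apply Ma--Minda.} Take $\lambda = (1+\alpha)/2$ in Lemma \ref{pra22}. Then $|2\lambda - 1| = \alpha \le 1$, so the maximum equals $1$ and $|c_2 - \lambda c_1^2| \le 2$. Therefore $|a_3 - a_2^2| \le \alpha$.

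\textbf{Step 3: sharpness.} Choose $p(z) = (1+z^2)/(1-z^2) \in \mathcal{P}$, for which $c_1 = 0$ and $c_2 = 2$. Let $f$ be defined by $z f'(z)/f(z) = \bigl((1+z^2)/(1-z^2)\bigr)^\alpha$, which lies in $\mathcal{S}^*_\alpha$. For this $f$ we get $a_2 = 0$ and $a_3 = \alpha$, so equality holds.

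\textbf{Main obstacle.} The only delicate point is the bookkeeping in Step 1, namely obtaining the exact multiplier $(1+\alpha)/2$. This requires expanding $p^\alpha$ and the logarithmic-derivative series correctly to second order. Once that is in place, the fact that $|2\lambda-1| = \alpha \le 1$ falls in the favorable range of Lemma \ref{pra22} makes the bound immediate.
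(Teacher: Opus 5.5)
Your argument is correct and follows the paper's own route. The paper only cites this lemma from \cite{cho2017}, but its proof of Theorem~\ref{pra29} uses the same Carath\'eodory parametrization together with Lemma~\ref{pra22}, and at $\mu = 1$ it gives $\lambda = (1+\alpha)/2$ and $|2\lambda-1| = \alpha \le 1$, hence the bound $\alpha$. Your sharpness example from $p_2(z) = (1+z^2)/(1-z^2)$, with $a_2 = 0$ and $a_3 = \alpha$, is the same symmetric extremal used in Theorem~\ref{pra36}.
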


\begin{lem}[Ma--Owa \cite{maowa1993}]\label{pra26}
Let $f \in \mathcal{S}^*_\alpha$ with $0 < \alpha \le 1$. Then
\[
|a_4| \le
\begin{cases}
\dfrac{2\alpha}{3}, & 0 < \alpha \leq \sqrt{\dfrac{2}{17}},\\[4mm]
\dfrac{2\alpha(1 + 17\alpha^2)}{9}, & \sqrt{\dfrac{2}{17}} \leq \alpha \leq 1.
\end{cases}
\]
Both branches are sharp.
\end{lem}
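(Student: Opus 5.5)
The plan is to reduce the statement to an extremal problem over the Carath\'eodory class $\mathcal{P}$. Since $f\in\mathcal{S}^*_\alpha$ exactly when $zf'(z)/f(z)$ takes values in the sector $|\arg w|<\alpha\pi/2$, there is $p\in\mathcal{P}$, $p(z)=1+c_1z+c_2z^2+\cdots$, with
\[
\frac{zf'(z)}{f(z)} = p(z)^\alpha .
\]
I write $zf'(z)/f(z)=1+b_1z+b_2z^2+b_3z^3+\cdots$ and compare coefficients in $zf'(z)=f(z)\,(1+b_1z+\cdots)$. This gives
\[
a_2=b_1,\qquad a_3=\tfrac12\bigl(b_2+b_1^2\bigr),\qquad a_4=\tfrac13\Bigl(b_3+\tfrac32 b_1b_2+\tfrac12 b_1^3\Bigr).
\]
Expanding $p^\alpha$ binomially gives
\[
b_1=\alpha c_1,\qquad b_2=\alpha c_2+\tfrac{\alpha(\alpha-1)}{2}c_1^2,\qquad b_3=\alpha c_3+\alpha(\alpha-1)c_1c_2+\tfrac{\alpha(\alpha-1)(\alpha-2)}{6}c_1^3 .
\]
Substituting, I expect $a_4$ to take the form
\[
a_4=\frac{\alpha}{3}\bigl(c_3-2Bc_1c_2+Dc_1^3\bigr),
\]
where $B$ and $D$ are explicit polynomials in $\alpha$ that I will compute. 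The claimed bound $2\alpha/3$ is therefore equivalent to $|c_3-2Bc_1c_2+Dc_1^3|\le 2$.

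For the estimate I will use the classical sharp lemma for this cubic functional on $\mathcal{P}$ (Ali's lemma). It states that $|c_3-2Bc_1c_2+Dc_1^3|\le 2$ whenever $(B,D)$ lies in a certain explicit region of the plane. Outside that region, the maximum is attained at $c_1=2$ (after a rotation), and it then reduces to evaluating the functional at $p(z)=(1+z)/(1-z)$. If the lemma's hypotheses are not met in a convenient form, I will instead use the Libera--Z\l otkiewicz parametrization of $c_2$ and $c_3$ in terms of $c_1=2\zeta_1$ and $\zeta_2,\zeta_3\in\overline{\mathbb{D}}$. I then apply the triangle inequality in $\zeta_3$ and $\zeta_2$, which leaves a one-variable maximization in $t=|c_1|\in[0,2]$.

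The main obstacle is this last maximization. I need to show that for $\alpha\le\sqrt{2/17}$ the maximum of the resulting function of $t$ is attained at $t=0$, giving the value $2\alpha/3$. For $\alpha\ge\sqrt{2/17}$ it should be attained at $t=2$, giving the value $\frac{2\alpha}{9}(1+17\alpha^2)$. The threshold should come out as the value where the two endpoint values coincide, namely $2\alpha/3=\frac{2\alpha}{9}(1+17\alpha^2)$, i.e.\ $17\alpha^2=2$. I must also rule out an interior maximum, and I expect this to need a convexity or monotonicity check in $t$.

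For sharpness I will take the following two functions:
\[
\frac{zf'(z)}{f(z)}=\Bigl(\frac{1+z^3}{1-z^3}\Bigr)^\alpha \qquad\text{and}\qquad \frac{zf'(z)}{f(z)}=\Bigl(\frac{1+z}{1-z}\Bigr)^\alpha .
\]
For the first, only $b_3=2\alpha$ is nonzero among $b_1,b_2,b_3$, so $a_4=2\alpha/3$. For the second, writing $p^\alpha=\exp\bigl(2\alpha(z+z^3/3+\cdots)\bigr)$ gives $b_1=2\alpha$, $b_2=2\alpha^2$ and $b_3=\frac{2\alpha}{3}+\frac{4\alpha^3}{3}$. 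Hence $a_4=(2\alpha+34\alpha^3)/9$, which matches the second branch.
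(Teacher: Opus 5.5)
The paper gives no proof of this lemma; it is quoted from Ma--Owa, so your plan has to stand on its own. The setup and the first branch are fine. Carrying out your computation gives
\[
a_4=\frac{\alpha}{3}\bigl(c_3-2Bc_1c_2+Dc_1^3\bigr),\qquad B=\frac{2-5\alpha}{4},\qquad D=\frac{17\alpha^2-15\alpha+4}{12}.
\]
Ali's hypotheses $0\le B\le 1$ and $B(2B-1)\le D\le B$ then hold: $B\ge 0\iff \alpha\le 2/5$, $B(2B-1)<0<D$, and $D\le B\iff 17\alpha^2\le 2$. So $\sqrt{2/17}$ is exactly the edge $D=B$ of Ali's region. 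Your two extremal functions and their values of $a_4$ are also correct. \textbf{The gap is the second branch.} Ali's lemma gives no information outside its region, and your claim that the maximum there sits at $|c_1|=2$ is false in general. Lemma~\ref{pra23} of this paper treats a cubic functional of the same type whose middle branch comes from an interior value of $|c_1|$. For your particular $(B,D)$ the claim happens to be true, but that is precisely what has to be proved.

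Your fallback cannot prove it, because the interior maximum you hoped to exclude really occurs for the majorant you propose. With $c_1=t\in[0,2]$,
\[
c_3-2Bc_1c_2+Dc_1^3=\frac{(1+17\alpha^2)t^3}{12}+\frac{(4-t^2)t}{4}\bigl(5\alpha\zeta_2-\zeta_2^2\bigr)+\frac{(4-t^2)(1-|\zeta_2|^2)}{2}\,\zeta_3 .
\]
The triangle inequality at $|\zeta_2|=1$ gives the majorant $G(t)=(1+5\alpha)t-\frac{(17\alpha+2)(1-\alpha)}{12}t^3$. It equals the target $\frac{2(1+17\alpha^2)}{3}$ at $t=2$, but $G'(2)=17\alpha^2-10\alpha-1<0$ for $\alpha<(5+\sqrt{42})/17\approx 0.675$. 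For example, at $\alpha=\frac12$ one gets $G(\sqrt{8/3})=\frac{14\sqrt6}{9}\approx 3.81>\frac72$. The loss comes from treating $5\alpha\zeta_2$ and $-\zeta_2^2$ as simultaneously aligned, which is impossible.

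A correct route splits the second branch in two:
\begin{enumerate}[(i)]
\item For $\sqrt{2/17}\le\alpha\le 2/5$, write $Dc_1^3=Bc_1^3+(D-B)c_1^3$. Ali's lemma applies with $(B,B)$, and $|c_1|\le 2$ gives the bound $2+8(D-B)=\frac{2(1+17\alpha^2)}{3}$.
\item For $\alpha>2/5$ one has $B<0$, so Ali's lemma is unavailable. Pass to Schwarz coefficients via $c_1=2w_1$, $c_2=2w_2+2w_1^2$, $c_3=2w_3+4w_1w_2+2w_1^3$. This gives $a_4=\frac{2\alpha}{3}\bigl(w_3+\mu w_1w_2+\nu w_1^3\bigr)$ with $\mu=5\alpha$ and $\nu=\frac{1+17\alpha^2}{3}$. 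Then invoke the Prokhorov--Szynal lemma: for $\nu\ge 1$ its maximum equals $\nu$ in each of the following cases.
\begin{enumerate}[(a)]
\item $\mu\le 2$.
\item $2\le\mu\le 4$ and $\nu\ge(\mu^2+8)/12$; here this reads $43\alpha^2\ge 4$, which holds for $\alpha\ge 2/5$.
\item $\mu\ge 4$ and $\nu\ge\frac23(\mu-1)$; here this always holds.
\end{enumerate}
\end{enumerate}
The same lemma also recovers the first branch, with maximum $1$.
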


\begin{lem}[Ali--Singh \cite{ali1996}]\label{pra27}
Let $f \in \mathcal{S}^*_\alpha$ with $0 < \alpha \le 1$. Then
\[
|a_5| \le \frac{\alpha}{2}
\quad\text{when}\quad
228\alpha^4 - 194\alpha^3 + 2\alpha^2 + 39\alpha - 9 \le 0,
\]
and
\[
|a_5| \le \frac{\alpha^2(7 + 38\alpha^2)}{9}
\quad\text{when}\quad
76\alpha^3 - 60\alpha^2 + 32\alpha - 9 \ge 0.
\]
The two conditions are separate and are not complementary. The second estimate is sharp.
\end{lem}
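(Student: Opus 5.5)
The approach is to reduce the estimate to an extremal problem for the Carath\'eodory class $\mathcal{P}$. For $f \in \mathcal{S}^*_\alpha$ there is $p \in \mathcal{P}$, $p(z) = 1 + c_1 z + c_2 z^2 + \cdots$, with
\[
\frac{z f'(z)}{f(z)} = p(z)^\alpha ,
\]
since the sector condition says exactly that $(zf'/f)^{1/\alpha}$, taken with the principal branch, has positive real part. I would expand $p^\alpha = \exp(\alpha \log p)$ up to order $z^4$. Comparing coefficients in $z f'(z) = f(z)\,p(z)^\alpha$ then gives $a_2, \dots, a_5$ recursively as polynomials in $\alpha, c_1, \dots, c_4$. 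For $a_5$ this takes the form
\[
a_5 = \frac{\alpha}{4}\Bigl( c_4 + \beta_1 c_1 c_3 + \beta_2 c_2^2 + \beta_3 c_1^2 c_2 + \beta_4 c_1^4 \Bigr),
\]
where the $\beta_j$ are explicit polynomials in $\alpha$. Computing these $\beta_j$ is routine but must be done carefully, since the two conditions in the statement come from them.

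The second step is to bound this combination. Since $\mathcal{P}$ is rotation invariant, I may assume $c_1 = 2\zeta_1 \in [0,2]$. I would then use the Schur/Libera--Z\l{}otkiewicz parametrization, which writes $c_2, c_3, c_4$ in terms of $\zeta_1 \in [0,1]$ and $\zeta_2, \zeta_3, \zeta_4 \in \overline{\mathbb{D}}$. An alternative is to group the expression as $c_4 - \mu_1 c_1 c_3 - \mu_2 c_2^2 + \cdots$ and invoke known lemmas of Ma--Minda type, such as Lemma \ref{pra22} together with its third- and fourth-order analogues $|c_3 - 2B c_1 c_2 + D c_1^3| \le 2$, which hold on an explicit parameter region. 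The quantity $|a_5|$ then becomes a function of $\zeta_1$ and $|\zeta_2|, |\zeta_3|$, after the triangle inequality has been applied to the $\zeta_4$ and $\zeta_3$ terms.

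The two cases correspond to where the maximum in $\zeta_1$ is attained. At $\zeta_1 = 0$ we have $c_1 = 0$ and essentially only $c_4$, with a $c_2^2$ contribution, survives. This gives the bound $|a_5| \le \frac{\alpha}{4}\cdot 2 = \alpha/2$, and it is the global maximum when the relevant coefficient inequalities hold; I expect these to reduce to $228\alpha^4 - 194\alpha^3 + 2\alpha^2 + 39\alpha - 9 \le 0$. At $\zeta_1 = 1$, where $p = (1+z)/(1-z)$, the expansion $\alpha \log p = \alpha(2z + \tfrac{2}{3}z^3 + \cdots)$ gives $a_5 = \alpha^2(7 + 38\alpha^2)/9$. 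This value is the maximum under the monotonicity condition $76\alpha^3 - 60\alpha^2 + 32\alpha - 9 \ge 0$. Sharpness of the second bound is then witnessed by $zf'/f = \bigl((1+z)/(1-z)\bigr)^\alpha$.

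I expect the main obstacle to be the fourth-order step. Handling $c_4$ together with the mixed terms $c_1 c_3$ and $c_2^2$ does not reduce to a single Ma--Minda inequality. The optimization over $(\zeta_1, |\zeta_2|, |\zeta_3|)$ gives a multivariable polynomial whose maximum can be certified only on the stated $\alpha$-ranges, and this explains why the two conditions are not complementary. I would first try to show that the function is convex, or monotone, in $|\zeta_2|$ and $|\zeta_3|$, which pushes these variables to the boundary. That would leave a one-variable problem in $\zeta_1$, whose endpoint analysis yields the two polynomial conditions.
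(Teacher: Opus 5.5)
The paper does not prove this lemma. It is quoted from Ali--Singh \cite{ali1996} and used as a black box, so there is no internal argument to compare with. Judged on its own, your proposal is a plan that stops exactly where the content of the lemma begins. Several parts are fine:
the reduction to $zf'/f=p^\alpha$; the shape $a_5=\frac{\alpha}{4}(c_4+\beta_1c_1c_3+\cdots)$; the value $\alpha/2$ on the slice $\zeta_1=0$ (valid there for $\alpha\le 1/2$); and sharpness of the second bound via $((1+z)/(1-z))^\alpha$. However, neither polynomial condition is ever derived, and ``I expect these to reduce to \dots'' is not an argument. Those conditions are only sufficient conditions produced by one specific chain of estimates, which is why there is a gap between them. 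A different chain, such as Schur parameters followed by the triangle inequality, will produce its own conditions on $\alpha$. You would then have to show that your admissible ranges contain $(0,\alpha_1]$ and $[\alpha_2,1]$.

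More seriously, the mechanism you propose for the optimization fails. A direct computation gives $\beta_1=(7\alpha-3)/3$. With $\zeta_1\in[0,1]$, the variables $\zeta_3,\zeta_4$ enter $\frac{4}{\alpha}a_5$ only through
\[
2(1-\zeta_1^2)(1-|\zeta_2|^2)\Bigl[2\zeta_1\Bigl(\frac{7\alpha}{3}-\zeta_2\Bigr)\zeta_3-\bar\zeta_2\zeta_3^2+(1-|\zeta_3|^2)\zeta_4\Bigr].
\]
After the triangle inequality, the bracket is at most $1+2\zeta_1|\tfrac{7\alpha}{3}-\zeta_2|\,y-(1-|\zeta_2|)\,y^2$ with $y=|\zeta_3|$. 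This is concave in $y$, not convex. For $\zeta_2=0$ and $\alpha\le 3/7$, which covers the whole first range, its maximum is at the interior point $y=7\alpha\zeta_1/3$ for every $\zeta_1\in(0,1)$. So $|\zeta_3|$ cannot be pushed to the boundary. You must keep the interior value, which contributes the rational term $\zeta_1^2|\tfrac{7\alpha}{3}-\zeta_2|^2/(1-|\zeta_2|)$.

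In addition, the remaining $\zeta_3$-free part is a cubic polynomial in $\zeta_2$ whose modulus depends on $\arg\zeta_2$. Passing to $|\zeta_2|$ therefore costs a further triangle inequality, and you would have to control that loss. The problem does not collapse to an endpoint check in $\zeta_1$. What remains is a genuine two-variable maximization in $(\zeta_1,\zeta_2)$, and ruling out interior maxima there is precisely where any condition on $\alpha$ comes from. That step is absent, so the proposal establishes neither upper bound in the stated ranges.
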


\begin{rem}\label{pra28}
The polynomial $228\alpha^4 - 194\alpha^3 + 2\alpha^2 + 39\alpha - 9$ has a unique root $\alpha_1 \approx 0.3501$ in $(0,1]$ and is negative on $(0,\alpha_1]$. The polynomial $76\alpha^3 - 60\alpha^2 + 32\alpha - 9$ has a unique root $\alpha_2 \approx 0.4425$ in $(0,1]$ and is nonnegative on $[\alpha_2,1]$. Thus the first estimate for $|a_5|$ holds for $0 < \alpha \le \alpha_1$, the second for $\alpha_2 \le \alpha \le 1$, and there is a gap $(\alpha_1, \alpha_2) \approx (0.3501, 0.4425)$ where neither estimate applies. The one-variable source \cite{ali1996} does not supply a bound over the entire interval.
\end{rem}

\section{Directional Coefficient Inequalities}

We now transfer the one-variable results of Section \ref{pra21} to the Banach-space setting. In this section, the extremal mappings \eqref{pra43} (symmetric) and \eqref{pra44} (asymmetric) are defined in Section \ref{pra42} below. They are obtained by applying Proposition \ref{pra18} to the standard one-variable extremal functions associated with the Carath\'eodory functions
\[
p_1(z) = \frac{1+z}{1-z}, \qquad p_2(z) = \frac{1+z^2}{1-z^2}.
\]
Their membership in $\mathcal{S}^*_\alpha(\mathbb{B})$ follows from Proposition \ref{pra18}. Sharpness statements follow from Corollary \ref{pra16}, which bounds the Banach-space extremal problem by the one-variable extremal problem, and Corollary \ref{pra19}, which gives the reverse inequality.

We first prove a parameterized Fekete--Szeg\H{o} estimate.

\begin{theo}\label{pra29}
Let $F \in \mathcal{S}^*_\alpha(\mathbb{B})$ with $F(x) = g(x)x$. For any $x_0 \in \partial \mathbb{B}$, $T_{x_0} \in T(x_0)$, and any real $\mu$,
\begin{equation}\label{pra30}
|A_3 - \mu A_2^2|
\le
\begin{cases}
\alpha^2(3 - 4\mu), & \mu \le \dfrac{3\alpha - 1}{4\alpha},\\[3mm]
\alpha, & \dfrac{3\alpha - 1}{4\alpha} \le \mu \le \dfrac{3\alpha + 1}{4\alpha},\\[3mm]
\alpha^2(4\mu - 3), & \mu \ge \dfrac{3\alpha + 1}{4\alpha}.
\end{cases}
\end{equation}
The bound is sharp: the asymmetric mapping \eqref{pra44} attains equality in the first and third branches, and the symmetric mapping \eqref{pra43} attains equality in the middle branch.
\end{theo}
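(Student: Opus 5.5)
The plan is to reduce the estimate to the one-variable Fekete--Szeg\H{o} problem via the slice correspondence, and then to apply Lemma \ref{pra22}.

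\textbf{Reduction.} Fix $x_0 \in \partial\mathbb{B}$ and $T_{x_0} \in T(x_0)$. By Theorem \ref{pra12}, the slice function $f(\xi) = g(\xi x_0)\xi$ belongs to $\mathcal{S}^*_\alpha$. Corollary \ref{pra16}, applied to the polynomial $P(A_2,A_3) = A_3 - \mu A_2^2$, gives $A_3 - \mu A_2^2 = a_3 - \mu a_2^2$. So it suffices to bound $|a_3 - \mu a_2^2|$ over $\mathcal{S}^*_\alpha$.

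\textbf{Representation.} Since $|\arg(zf'(z)/f(z))| < \alpha\pi/2$, I will write $zf'(z)/f(z) = p(z)^\alpha$ with $p \in \mathcal{P}$, using the principal branch. This is legitimate because $(zf'/f)^{1/\alpha}$ is analytic with positive real part and equals $1$ at the origin. Comparing coefficients in
\[
1 + a_2 z + (2a_3 - a_2^2)z^2 + \cdots = 1 + \alpha c_1 z + \Bigl(\alpha c_2 + \tfrac{\alpha(\alpha-1)}{2}c_1^2\Bigr)z^2 + \cdots
\]
gives $a_2 = \alpha c_1$ and $a_3 = \tfrac{\alpha}{2}\bigl[c_2 + \tfrac{3\alpha-1}{2}c_1^2\bigr]$. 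Hence
\[
a_3 - \mu a_2^2 = \frac{\alpha}{2}\bigl(c_2 - \lambda c_1^2\bigr), \qquad \lambda = 2\mu\alpha - \frac{3\alpha-1}{2}.
\]

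\textbf{Applying Ma--Minda.} Lemma \ref{pra22} now yields $|a_3 - \mu a_2^2| \le \alpha\max\{1,|2\lambda-1|\}$. Since $2\lambda - 1 = \alpha(4\mu - 3)$, the bound reads $\alpha\max\{1, \alpha|4\mu-3|\}$. For real $\mu$, the inequality $\alpha|4\mu - 3| \le 1$ holds exactly when
\[
\frac{3\alpha-1}{4\alpha} \le \mu \le \frac{3\alpha+1}{4\alpha}.
\]
On this interval the bound is $\alpha$. Outside it, the bound is $\alpha^2(3-4\mu)$ for $\mu$ below the interval and $\alpha^2(4\mu-3)$ for $\mu$ above it. This is exactly \eqref{pra30}.

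\textbf{Sharpness.}
\begin{itemize}
\item For the outer branches, take $p = p_1$, so $c_1 = c_2 = 2$. Then $|a_3 - \mu a_2^2| = \tfrac{\alpha}{2}|2 - 4\lambda| = \alpha|2\lambda - 1| = \alpha^2|4\mu-3|$.
\item For the middle branch, take $p = p_2$, so $c_1 = 0$ and $c_2 = 2$. Then $|a_3 - \mu a_2^2| = \alpha$.
\end{itemize}
The corresponding one-variable functions, defined by $zf'/f = p_j^\alpha$, lie in $\mathcal{S}^*_\alpha$. Lifting them by Proposition \ref{pra18} produces the mappings \eqref{pra44} and \eqref{pra43}, whose directional coefficients along $u$ equal $a_n$. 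Equality therefore transfers to the Banach-space class, as in Corollary \ref{pra19}.

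The only delicate step is the coefficient bookkeeping, namely the $z^2$ term of $p^\alpha$ and the resulting value of $\lambda$. Everything else is a direct citation of earlier results.
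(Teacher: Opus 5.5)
Your proposal is correct and follows essentially the same route as the paper: the slice reduction via Theorem \ref{pra12}, the representation $zf'/f = p^\alpha$ giving $a_3 - \mu a_2^2 = \frac{\alpha}{2}(c_2 - \lambda c_1^2)$, Lemma \ref{pra22}, and sharpness from $p_1$ and $p_2$ lifted to \eqref{pra44} and \eqref{pra43}. Your observation that $2\lambda - 1 = \alpha(4\mu - 3)$, which gives the compact bound $\alpha\max\{1, \alpha|4\mu-3|\}$, is just a tidier way to organize the paper's case split on $\lambda$.
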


\begin{proof}
By Theorem \ref{pra12}, $|A_3 - \mu A_2^2| = |a_3 - \mu a_2^2|$, where $f \in \mathcal{S}^*_\alpha$. Let
\[
p(z) = 1 + c_1 z + c_2 z^2 + \cdots
\]
be the Carath\'eodory function associated with $f$ through
\[
\frac{zf'(z)}{f(z)} = (p(z))^\alpha, \qquad \operatorname{Re} p(z) > 0.
\]
Then (see \cite{ma1992})
\[
a_2 = \alpha c_1, \qquad
a_3 = \frac{\alpha}{2}\left(c_2 + \frac{3\alpha - 1}{2} c_1^2\right).
\]
Hence
\[
|a_3 - \mu a_2^2|
=
\frac{\alpha}{2}
\left|
c_2 - \left(2\mu\alpha - \frac{3\alpha - 1}{2}\right) c_1^2
\right|.
\]
Set
\[
\lambda = 2\mu\alpha - \frac{3\alpha - 1}{2}.
\]
By Lemma \ref{pra22},
\[
|c_2 - \lambda c_1^2| \le 2\max\{1, |2\lambda - 1|\}.
\]
If $\lambda \le 0$, then $|2\lambda - 1| = 1 - 2\lambda \ge 1$, so
\[
|a_3 - \mu a_2^2|
\le \frac{\alpha}{2} \cdot 2(1 - 2\lambda)
= \alpha(1 - 2\lambda)
= \alpha^2(3 - 4\mu).
\]
If $0 \le \lambda \le 1$, then $|2\lambda - 1| \le 1$, so the bound is $\alpha$.
If $\lambda \ge 1$, then $|2\lambda - 1| = 2\lambda - 1$, and
\[
|a_3 - \mu a_2^2|
\le \frac{\alpha}{2} \cdot 2(2\lambda - 1)
= \alpha(2\lambda - 1)
= \alpha^2(4\mu - 3).
\]
This proves the three cases.

For sharpness, we verify the equality cases directly. For the asymmetric mapping \eqref{pra44}, $A_2 = 2\alpha$ and $A_3 = 3\alpha^2$, so
\[
|A_3 - \mu A_2^2| = |3\alpha^2 - 4\mu\alpha^2| = \alpha^2|3 - 4\mu|.
\]
When $\mu \le (3\alpha-1)/(4\alpha)$, we have $4\mu \le (3\alpha-1)/\alpha \le 3$ for $0 < \alpha \le 1$, hence $|3-4\mu| = 3-4\mu$, giving $\alpha^2(3-4\mu)$. When $\mu \ge (3\alpha+1)/(4\alpha)$, we have $4\mu \ge (3\alpha+1)/\alpha \ge 3$, hence $|3-4\mu| = 4\mu-3$, giving $\alpha^2(4\mu-3)$. This establishes sharpness in the first and third branches.

For the middle branch, take the symmetric mapping \eqref{pra43}, for which $A_2 = 0$ and $A_3 = \alpha$. Then
\[
|A_3 - \mu A_2^2| = \alpha.
\]
This establishes sharpness in the middle branch.
\end{proof}

\begin{theo}\label{pra31}
Let $F \in \mathcal{S}^*_\alpha(\mathbb{B})$ with $F(x) = g(x)x$. For any $x_0 \in \partial \mathbb{B}$ and $T_{x_0} \in T(x_0)$,
\begin{equation}\label{pra32}
|A_2 A_3 - A_4|
\leq
\begin{cases}
\dfrac{2}{3}\alpha, & 0 < \alpha \leq \alpha_0,\\[4mm]
\dfrac{2\alpha(2\alpha + 1)}{9}
\sqrt{\dfrac{2(2\alpha + 1)}{(1-\alpha)(5\alpha + 2)}}, &
\alpha_0 < \alpha < \dfrac{2 + \sqrt{34}}{10},\\[5mm]
\dfrac{2}{9}\alpha(10\alpha^2 - 1), &
\dfrac{2 + \sqrt{34}}{10} \leq \alpha \leq 1,
\end{cases}
\end{equation}
where $\alpha_0 \approx 0.559376$ is the unique root in $(0,1)$ of
\[
16\alpha^3 + 69\alpha^2 - 15\alpha - 16 = 0.
\]
The bound is sharp. The third branch is attained by the asymmetric mapping \eqref{pra44} throughout $\alpha \in [(2+\sqrt{34})/10, 1]$. The first and second branches are sharp as well, via Corollary \ref{pra19} applied to the corresponding one-variable extremal functions in \cite{cho2017}.
\end{theo}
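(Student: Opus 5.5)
The plan is to reduce the statement to the one-variable result Lemma \ref{pra23} through the slice correspondence, and then obtain sharpness from the lifting construction. Fix $F\in\mathcal{S}^*_\alpha(\mathbb{B})$ with $F(x)=g(x)x$, a direction $x_0\in\partial\mathbb{B}$ and a functional $T_{x_0}\in T(x_0)$, and let $f(\xi)=g(\xi x_0)\xi$ be the slice function.

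By Theorem \ref{pra12}, $f\in\mathcal{S}^*_\alpha$ and $A_n=a_n$ for every $n$. Applying Corollary \ref{pra16} to the polynomial $P(A_2,A_3,A_4)=A_2A_3-A_4$ gives
\[
|A_2A_3-A_4|=|a_2a_3-a_4|.
\]
Lemma \ref{pra23} now yields the three-branch bound \eqref{pra32} verbatim, with the same $\alpha_0$ and the same breakpoint $(2+\sqrt{34})/10$. This gives the inequality for every admissible choice of $F$, $x_0$ and $T_{x_0}$.

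For sharpness, Corollary \ref{pra19} shows that the supremum of $|A_2A_3-A_4|$ over $F$, $x_0$ and $T_{x_0}$ equals the supremum of $|a_2a_3-a_4|$ over $\mathcal{S}^*_\alpha$. Since Lemma \ref{pra23} asserts that all three branches are attained by one-variable extremal functions $f^\ast$, the lifts $F_{f^\ast}$ of Proposition \ref{pra18} attain equality along $x_0=u$ in each branch.

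For the third branch I would also check directly that the asymmetric mapping \eqref{pra44} works. It is the lift of $f$ with $zf'/f=((1+z)/(1-z))^\alpha$, so its coefficients are those of $p_1$ raised to the power $\alpha$. Here $(1+z)/(1-z)=1+2z+2z^2+2z^3+\cdots$, so with $c_1=c_2=c_3=2$ one gets
\[
a_2=2\alpha,\qquad a_3=3\alpha^2.
\]
Expanding $p^\alpha$ to third order and using $3a_4=b_3+a_2b_2+a_3b_1$, where $b_k$ are the coefficients of $zf'/f$, gives
\[
a_4=\frac{2\alpha(1+17\alpha^2)}{9}.
\]
Consequently
\[
a_2a_3-a_4=6\alpha^3-\frac{2\alpha+34\alpha^3}{9}=\frac{2}{9}\alpha(10\alpha^2-1),
\]
which is positive for $\alpha$ in the third range, so equality holds there.

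The main obstacle is this last coefficient computation, the third-order expansion of $p^\alpha$ giving $b_3$. It is routine but error-prone, and the known $|a_4|$ extremal value in Lemma \ref{pra26} serves as a cross-check. The first two branches do not need an explicit computation, since they rest entirely on Corollary \ref{pra19} and the sharpness assertion of \cite{cho2017}.
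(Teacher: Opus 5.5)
Your proposal is correct and follows the paper's proof. You reduce to Lemma \ref{pra23} via Theorem \ref{pra12} and Corollary \ref{pra16}, obtain sharpness of the first two branches from Corollary \ref{pra19} and the lifting of Proposition \ref{pra18}, and check directly that the asymmetric mapping \eqref{pra44} gives $A_2A_3-A_4=\frac{2}{9}\alpha(10\alpha^2-1)>0$ on the third range. Your recursion $3a_4=b_3+a_2b_2+a_3b_1$ is only a different bookkeeping of the expansion the paper carries out by exponentiating $\log g(\xi u)$.
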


\begin{proof}
By Corollary \ref{pra16}, $|A_2 A_3 - A_4| = |a_2 a_3 - a_4|$ with $f \in \mathcal{S}^*_\alpha$. The result follows from Lemma \ref{pra23}.

For the third branch, the asymmetric mapping \eqref{pra44} has
\[
A_2 = 2\alpha, \quad A_3 = 3\alpha^2, \quad A_4 = \frac{2\alpha(1 + 17\alpha^2)}{9},
\]
hence
\[
A_2 A_3 - A_4 = 6\alpha^3 - \frac{2\alpha(1 + 17\alpha^2)}{9} = \frac{2\alpha(10\alpha^2 - 1)}{9}.
\]
Since $(2+\sqrt{34})/10 \approx 0.7831 > 1/\sqrt{10} \approx 0.3162$, the factor $10\alpha^2 - 1$ is positive throughout the third branch, and
\[
|A_2 A_3 - A_4| = \frac{2}{9}\alpha(10\alpha^2 - 1).
\]
This matches the third branch, establishing sharpness throughout $\alpha \in [(2+\sqrt{34})/10, 1]$.

For the first and second branches, sharpness follows by Corollary \ref{pra19}: the one-variable inequality of \cite{cho2017} is sharp for each branch, and the corresponding one-variable extremal function can be lifted to a Banach-space mapping in $\mathcal{S}^*_\alpha(\mathbb{B})$ via Proposition \ref{pra18}, preserving the value of $|A_2 A_3 - A_4|$.
\end{proof}

\begin{rem}\label{pra33}
The three branches in \eqref{pra32} are continuous at the transition points. At $\alpha = \alpha_0$, the first and second branches agree by definition of $\alpha_0$. At $\alpha = (2+\sqrt{34})/10$, the second and third branches coincide; this follows from the algebraic identity
\[
\left(\frac{2\alpha(2\alpha+1)}{9}\right)^2 \cdot \frac{2(2\alpha+1)}{(1-\alpha)(5\alpha+2)}
= \left(\frac{2\alpha(10\alpha^2-1)}{9}\right)^2.
\]
Indeed, setting $\alpha = (2+\sqrt{34})/10$ and writing $t = \sqrt{34}$, one computes
\[
10\alpha^2 - 1 = \frac{14 + 2t}{5}, \qquad
1 - \alpha = \frac{8-t}{10}, \qquad
5\alpha + 2 = \frac{6+t}{2}, \qquad
2\alpha + 1 = \frac{7+t}{5}.
\]
Substituting and simplifying yields the identity. Furthermore, $0 < \alpha_0 < (2+\sqrt{34})/10 < 1$, so the three intervals are correctly ordered.
\end{rem}

\begin{theo}\label{pra34}
Let $F \in \mathcal{S}^*_\alpha(\mathbb{B})$ with $F(x) = g(x)x$. For any $x_0 \in \partial \mathbb{B}$ and $T_{x_0} \in T(x_0)$,
\begin{equation}\label{pra35}
|A_2 A_4 - A_3^2| \leq \alpha^2.
\end{equation}
The bound is sharp for the symmetric mapping \eqref{pra43}.
\end{theo}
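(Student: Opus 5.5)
The proof is a direct transfer through the slice correspondence. Fix $x_0 \in \partial\mathbb{B}$ and $T_{x_0} \in T(x_0)$, and let $f(\xi) = g(\xi x_0)\xi$ be the slice function. By Theorem \ref{pra12}, $f \in \mathcal{S}^*_\alpha$ and $A_n = a_n$ for all $n$. Applying Corollary \ref{pra16} to the polynomial $P(A_2,A_3,A_4) = A_2A_4 - A_3^2$ gives
\[
|A_2A_4 - A_3^2| = |a_2a_4 - a_3^2|.
\]
Lemma \ref{pra24} then yields the bound $\alpha^2$ at once. This establishes \eqref{pra35}.

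For sharpness I would use the symmetric mapping \eqref{pra43}. This is the lifting, via Proposition \ref{pra18}, of the one-variable function $f$ determined by $zf'(z)/f(z) = \bigl((1+z^2)/(1-z^2)\bigr)^\alpha$; it lies in $\mathcal{S}^*_\alpha(\mathbb{B})$ by that proposition. Since $f$ is odd, $a_2 = a_4 = 0$, so $A_2A_4 - A_3^2 = -A_3^2$. It therefore suffices to compute $A_3 = a_3$.

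With $p(z) = p_2(z) = 1 + 2z^2 + \cdots$ we have $c_1 = 0$ and $c_2 = 2$. The formula $a_3 = \tfrac{\alpha}{2}\bigl(c_2 + \tfrac{3\alpha-1}{2}c_1^2\bigr)$, already used in the proof of Theorem \ref{pra29}, then gives $a_3 = \alpha$. This is consistent with the value $A_3 = \alpha$ recorded there for \eqref{pra43}. Hence $|A_2A_4 - A_3^2| = \alpha^2$, and equality holds.

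The only point needing care is confirming that the symmetric extremal has vanishing even coefficients, so that $A_2 = A_4 = 0$. This follows from oddness: $zf'/f$ is even, hence $\log(f(z)/z)$ is even, hence $f$ is odd. Alternatively, sharpness follows abstractly from Corollary \ref{pra19} together with the sharpness in Lemma \ref{pra24}.
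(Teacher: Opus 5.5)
Your proposal is correct and follows the paper's proof: reduce to $|a_2a_4-a_3^2|$ via Corollary \ref{pra16}, apply Lemma \ref{pra24}, and check sharpness on the symmetric mapping \eqref{pra43}, where $A_2=A_4=0$ and $A_3=\alpha$. Your oddness argument and the computation $a_3=\tfrac{\alpha}{2}\bigl(c_2+\tfrac{3\alpha-1}{2}c_1^2\bigr)=\alpha$ with $c_1=0$, $c_2=2$ only make explicit the coefficient values that the paper computes in Section \ref{pra42}.
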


\begin{proof}
By Corollary \ref{pra16}, $|A_2 A_4 - A_3^2| = |a_2 a_4 - a_3^2|$ with $f \in \mathcal{S}^*_\alpha$. The result follows from Lemma \ref{pra24}. For the symmetric mapping \eqref{pra43}, $A_2 = A_4 = 0$ and $A_3 = \alpha$, so $|A_2 A_4 - A_3^2| = \alpha^2$.
\end{proof}

\begin{theo}\label{pra36}
Let $F \in \mathcal{S}^*_\alpha(\mathbb{B})$ with $F(x) = g(x)x$. For any $x_0 \in \partial \mathbb{B}$ and $T_{x_0} \in T(x_0)$,
\begin{equation}\label{pra37}
|A_3 - A_2^2| \leq \alpha.
\end{equation}
The bound is sharp for the symmetric mapping \eqref{pra43}.
\end{theo}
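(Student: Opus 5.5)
The inequality \eqref{pra37} is obtained by reducing to the one-variable case through the slice correspondence. First, Corollary \ref{pra16} applied with the polynomial $P(A_1,A_2,A_3)=A_3-A_2^2$ gives
\[
|A_3 - A_2^2| = |a_3 - a_2^2|,
\]
where $a_n$ are the Taylor coefficients of the slice function $f(\xi)=g(\xi x_0)\xi$. By Theorem \ref{pra12}, $f\in\mathcal{S}^*_\alpha$. Lemma \ref{pra25} then yields $|a_3-a_2^2|\le\alpha$ at once.

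As an independent check, the estimate is also the case $\mu=1$ of Theorem \ref{pra29}. To see this, note that for $0<\alpha\le 1$,
\[
\frac{3\alpha-1}{4\alpha}\le 1\le \frac{3\alpha+1}{4\alpha}.
\]
The left inequality holds because $3\alpha-1\le 4\alpha$. The right one holds because $4\alpha\le 3\alpha+1$, which is equivalent to $\alpha\le 1$. So $\mu=1$ lies in the middle branch, and the bound there is exactly $\alpha$. Equivalently, in the Carath\'eodory parametrization $\lambda=2\alpha-\tfrac{3\alpha-1}{2}=\tfrac{\alpha+1}{2}\in[0,1]$, and Lemma \ref{pra22} gives $\tfrac{\alpha}{2}\cdot 2=\alpha$.

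For sharpness, I will use the symmetric mapping \eqref{pra43}. It is the lifting, via Proposition \ref{pra18}, of the one-variable function $f$ with $zf'/f=\bigl((1+z^2)/(1-z^2)\bigr)^\alpha$, so it belongs to $\mathcal{S}^*_\alpha(\mathbb{B})$. Along $u$ its directional coefficients equal those of $f$. Since $p_2$ is even, $c_1=0$ and $c_2=2$. Hence $A_2=a_2=\alpha c_1=0$ and $A_3=a_3=\tfrac{\alpha}{2}c_2=\alpha$. Therefore $|A_3-A_2^2|=\alpha$, and equality holds.

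The only step needing care is confirming these coefficient values of \eqref{pra43}. They follow directly from the formulas $a_2=\alpha c_1$ and $a_3=\tfrac{\alpha}{2}\bigl(c_2+\tfrac{3\alpha-1}{2}c_1^2\bigr)$ recorded in the proof of Theorem \ref{pra29}, so I expect no real obstacle.
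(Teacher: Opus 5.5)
Your proof is correct and follows the paper's argument. Corollary~\ref{pra16} and Theorem~\ref{pra12} reduce the problem to the slice function, Lemma~\ref{pra25} gives the bound, and the symmetric mapping \eqref{pra43} with $A_2=0$, $A_3=\alpha$ gives equality; your extra check via the case $\mu=1$ of Theorem~\ref{pra29} (where $\lambda=(\alpha+1)/2\in[0,1]$) and your computation of $A_2,A_3$ from $c_1=0$, $c_2=2$ are both valid, though not needed.
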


\begin{proof}
By Corollary \ref{pra16}, $|A_3 - A_2^2| = |a_3 - a_2^2|$ with $f \in \mathcal{S}^*_\alpha$. The result follows from Lemma \ref{pra25}. For the symmetric mapping \eqref{pra43}, $A_2 = 0$ and $A_3 = \alpha$, so $|A_3 - A_2^2| = \alpha$.
\end{proof}

\begin{theo}\label{pra38}
Let $F \in \mathcal{S}^*_\alpha(\mathbb{B})$ with $F(x) = g(x)x$. For any $x_0 \in \partial \mathbb{B}$ and $T_{x_0} \in T(x_0)$,
\begin{equation}\label{pra39}
|A_4|
\leq
\begin{cases}
\dfrac{2\alpha}{3}, & 0 < \alpha \leq \sqrt{\dfrac{2}{17}},\\[4mm]
\dfrac{2\alpha(1 + 17\alpha^2)}{9}, & \sqrt{\dfrac{2}{17}} \leq \alpha \leq 1.
\end{cases}
\end{equation}
Both branches are sharp. The second branch is attained by the asymmetric mapping \eqref{pra44}. The first branch is sharp in the one-variable theory (see \cite{maowa1993}); the corresponding Banach-space extremal is obtained by lifting via Proposition \ref{pra18}.

For $|A_5|$, the following two estimates hold, subject to the stated separate conditions inherited from the one-variable literature \cite{ali1996}:
\begin{equation}\label{pra40}
|A_5| \le \frac{\alpha}{2}
\quad\text{when}\quad
0 < \alpha \le \alpha_1 \approx 0.3501,
\end{equation}
and
\begin{equation}\label{pra41}
|A_5| \le \frac{\alpha^2(7 + 38\alpha^2)}{9}
\quad\text{when}\quad
\alpha_2 \approx 0.4425 \le \alpha \le 1,
\end{equation}
where $\alpha_1$ is the unique root in $(0,1]$ of $228\alpha^4 - 194\alpha^3 + 2\alpha^2 + 39\alpha - 9 = 0$ and $\alpha_2$ is the unique root in $(0,1]$ of $76\alpha^3 - 60\alpha^2 + 32\alpha - 9 = 0$. The two conditions are separate and are not complementary; there is a gap $(\alpha_1, \alpha_2)$ where neither estimate applies. The one-variable source \cite{ali1996} does not supply a bound over the entire interval $(0,1]$, and we do not extend it here. The second estimate \eqref{pra41} is sharp for the asymmetric mapping \eqref{pra44} whenever its condition is satisfied.
\end{theo}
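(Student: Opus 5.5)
The argument runs exactly as in Theorems \ref{pra34} and \ref{pra36}. By Theorem \ref{pra12} and Corollary \ref{pra16}, $A_4 = a_4$ and $A_5 = a_5$, where $a_n$ are the Taylor coefficients of the slice function $f(\xi) = g(\xi x_0)\xi$. The same theorem gives $f \in \mathcal{S}^*_\alpha$. The bound \eqref{pra39} is then Lemma \ref{pra26} applied to $f$. For $A_5$, Remark \ref{pra28} turns the two polynomial conditions of Lemma \ref{pra27} into the intervals $(0,\alpha_1]$ and $[\alpha_2,1]$. Applying Lemma \ref{pra27} on each interval gives \eqref{pra40} and \eqref{pra41}. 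No new estimate is needed on the gap $(\alpha_1,\alpha_2)$, since the statement asserts nothing there.

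For sharpness, I would compute the coefficients of the asymmetric mapping \eqref{pra44} directly. This is the lift by Proposition \ref{pra18} of $f$ with $zf'/f = \bigl((1+z)/(1-z)\bigr)^\alpha$. Write
\[
\log\frac{f(z)}{z} = \alpha\int_0^z \frac{1}{t}\log\frac{1+t}{1-t}\,dt = 2\alpha\Bigl(z + \frac{z^3}{9} + \frac{z^5}{25}+\cdots\Bigr).
\]
Exponentiating gives $a_2 = 2\alpha$, $a_3 = 2\alpha^2$... I must be careful here, because Theorem \ref{pra29} uses $A_3 = 3\alpha^2$ for this mapping. So I would redo the expansion via $a_3 = \tfrac{\alpha}{2}\bigl(c_2 + \tfrac{3\alpha-1}{2}c_1^2\bigr)$ with $c_1 = c_2 = 2$, which gives $a_3 = \alpha + \alpha(3\alpha-1) = 3\alpha^2$. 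Continuing the same recursion $(n-1)a_n = \sum_{k} q_k a_{n-k}$, where $q_k$ are the coefficients of $p^\alpha$, gives $a_4 = \tfrac{2\alpha(1+17\alpha^2)}{9}$ and $a_5 = \tfrac{\alpha^2(7+38\alpha^2)}{9}$. This matches the second branch of \eqref{pra39} and the bound \eqref{pra41}. Since \eqref{pra44} lies in $\mathcal{S}^*_\alpha(\mathbb{B})$ by Proposition \ref{pra18}, and Theorem \ref{pra12} identifies its directional coefficients along $u$ with these $a_n$, equality is attained.

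For the first branch of \eqref{pra39} I would not compute anything new. Lemma \ref{pra26} asserts that this branch is sharp, so some $f^\ast \in \mathcal{S}^*_\alpha$ attains it. Lifting $f^\ast$ by Proposition \ref{pra18}, or equivalently applying Corollary \ref{pra19} with $P(A_1,\dots,A_4) = A_4$, gives a Banach-space extremal. A natural candidate is the lift of the rotationally threefold-symmetric function with $zf'/f = \bigl((1+z^3)/(1-z^3)\bigr)^\alpha$, which gives $a_4 = 2\alpha/3$.

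The main obstacle is the coefficient bookkeeping for $a_4$ and $a_5$ of the extremal function, which requires expanding $p_1^\alpha$ to order four. I would organize it through the recursion above, using the coefficients of $\bigl(\tfrac{1+z}{1-z}\bigr)^\alpha$:
\[
q_1 = 2\alpha, \qquad q_2 = 2\alpha^2, \qquad q_3 = \tfrac{2\alpha+4\alpha^3}{3}, \qquad q_4 = \tfrac{8\alpha^2+2\alpha^4}{3},
\]
and then check the results against the closed forms above.
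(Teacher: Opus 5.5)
Your argument follows the paper's proof. Theorem \ref{pra12} and Corollary \ref{pra16} reduce $A_4, A_5$ to $a_4, a_5$ of the slice $f\in\mathcal{S}^*_\alpha$. Lemmas \ref{pra26}--\ref{pra27}, with Remark \ref{pra28}, give the bounds. Sharpness comes from the asymmetric mapping \eqref{pra44} and from lifting by Proposition \ref{pra18}. Your explicit first-branch candidate is correct: the lift of $f$ with $zf'/f=\bigl((1+z^3)/(1-z^3)\bigr)^\alpha$ has $q_1=q_2=0$ and $q_3=2\alpha$, so $a_4=2\alpha/3$. The paper leaves this example to the one-variable literature.

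However, two computations in your sharpness check are wrong as written.

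\emph{The display for $\log(f/z)$.} The formula $\log(f(z)/z)=\alpha\int_0^z t^{-1}\log\frac{1+t}{1-t}\,dt$ is not valid. From $zf'/f=p_1^\alpha$ one gets $\frac{d}{dz}\log\frac{f(z)}{z}=\frac{p_1(z)^\alpha-1}{z}$, so the integrand must be $(p_1(t)^\alpha-1)/t$, as in \eqref{pra44}. You replaced $p_1^\alpha-1$ by $\log p_1^\alpha$, which agrees with it only in the linear term; that is why you got the spurious $a_3=2\alpha^2$. You discard this route, but the display should be deleted rather than left as a false start.

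\emph{The value of $q_4$.} This error matters more. Expanding $p_1^\alpha=\exp\bigl(2\alpha(z+z^3/3+\cdots)\bigr)$, the $z^4$ coefficient is
\[
\tfrac12\cdot\tfrac{8\alpha^2}{3}+\tfrac{16\alpha^4}{24}=\tfrac{4\alpha^2+2\alpha^4}{3}.
\]
At $\alpha=1$ this gives $q_4=2$, as it must for $(1+z)/(1-z)$; your value gives $10/3$. With your $q_4$, the recursion $4a_5=q_1a_4+q_2a_3+q_3a_2+q_4$ yields $a_5=(10\alpha^2+38\alpha^4)/9$. That does not match \eqref{pra41}: at $\alpha=1$ it gives $48/9$ instead of the Koebe value $5$. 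With the corrected $q_4$, the same recursion gives $4a_5=(28\alpha^2+152\alpha^4)/9$, i.e.\ $a_5=\alpha^2(7+38\alpha^2)/9$, as required.

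Your recursion $(n-1)a_n=\sum_{k=1}^{n-1}q_ka_{n-k}$ is correct. So are $q_1,q_2,q_3$ and the resulting $a_2=2\alpha$, $a_3=3\alpha^2$ and $a_4=2\alpha(1+17\alpha^2)/9$. Once these two slips are fixed, your proof is complete and coincides with the paper's.
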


\begin{proof}
By Corollary \ref{pra16}, $|A_4| = |a_4|$ and $|A_5| = |a_5|$ with $f \in \mathcal{S}^*_\alpha$.

The bound \eqref{pra39} for $|A_4|$ is the sharp one-variable estimate of Ma and Owa \cite{maowa1993}, stated in Lemma \ref{pra26}. The first branch is attained in the one-variable theory; the corresponding Banach-space extremal is obtained by Proposition \ref{pra18}. The second branch is directly attained by the asymmetric mapping \eqref{pra44}, since a direct calculation gives
\[
|A_4| = \frac{2\alpha(1 + 17\alpha^2)}{9}.
\]

The bounds \eqref{pra40}--\eqref{pra41} for $|A_5|$ are the one-variable estimates of Ali and Singh \cite{ali1996}, stated in Lemma \ref{pra27}. The numerical ranges of the conditions are given in Remark \ref{pra28}. The estimate \eqref{pra41} is attained by the asymmetric mapping \eqref{pra44}, whose fifth directional coefficient is
\[
A_5 = \frac{\alpha^2(7 + 38\alpha^2)}{9}.
\]
\end{proof}

\section{Extremal Mappings}\label{pra42}

Let $u \in \partial \mathbb{B}$ and choose $T_u \in T(u)$.

The explicit Banach-space extremal mappings below are obtained from Proposition \ref{pra18} by lifting the standard one-variable extremal functions associated with the Carath\'eodory functions
\[
p_1(z) = \frac{1+z}{1-z}, \qquad p_2(z) = \frac{1+z^2}{1-z^2}.
\]
Consequently, their membership in $\mathcal{S}^*_\alpha(\mathbb{B})$ follows from Proposition \ref{pra18}.

For the symmetric case, take $f_2 \in \mathcal{S}^*_\alpha$ defined by
\[
\frac{z f_2'(z)}{f_2(z)} = \left(\frac{1+z^2}{1-z^2}\right)^\alpha
\]
and set $F_{\mathrm{ext},2} := F_{f_2}$. Explicitly,
\begin{equation}\label{pra43}
F_{\mathrm{ext},2}(x)
=
x\exp\left(
\int_0^{T_u(x)}
\left[
\left(\frac{1 + t^2}{1 - t^2}\right)^\alpha - 1
\right]\frac{dt}{t}
\right),
\qquad \|u\| = 1.
\end{equation}
The integrand extends continuously to $t=0$, since
\[
\left(\frac{1 + t^2}{1 - t^2}\right)^\alpha - 1 = 2\alpha t^2 + O(t^4)
\]
as $t \to 0$, so the quotient by $t$ is $O(t)$. Along $x_0 = u$, the scalar factor $g$ satisfies
\[
\log g(\xi u) = \int_0^\xi \left[\left(\frac{1+t^2}{1-t^2}\right)^\alpha - 1\right]\frac{dt}{t}.
\]
To compute the expansion, write $w = t^2$. Then
\[
\frac{1+w}{1-w} = 1 + 2w + 2w^2 + O(w^3),
\]
so
\[
\left(\frac{1+w}{1-w}\right)^\alpha
= 1 + 2\alpha w + 2\alpha^2 w^2 + O(w^3)
= 1 + 2\alpha t^2 + 2\alpha^2 t^4 + O(t^6).
\]
Hence
\[
\left(\frac{1+t^2}{1-t^2}\right)^\alpha - 1 = 2\alpha t^2 + 2\alpha^2 t^4 + O(t^6),
\]
and dividing by $t$ and integrating gives
\[
\log g(\xi u) = \alpha\xi^2 + \frac{\alpha^2}{2}\xi^4 + O(\xi^6).
\]
Therefore
\[
g(\xi u) = 1 + \alpha\xi^2 + \alpha^2\xi^4 + \cdots.
\]
Hence the directional coefficients are
\[
A_2 = 0, \qquad A_3 = \alpha, \qquad A_4 = 0, \qquad A_5 = \alpha^2.
\]

For the asymmetric case, take $f_1 \in \mathcal{S}^*_\alpha$ defined by
\[
\frac{z f_1'(z)}{f_1(z)} = \left(\frac{1+z}{1-z}\right)^\alpha
\]
and set $F_{\mathrm{ext},1} := F_{f_1}$. Explicitly,
\begin{equation}\label{pra44}
F_{\mathrm{ext},1}(x)
=
x\exp\left(
\int_0^{T_u(x)}
\left[
\left(\frac{1 + t}{1 - t}\right)^\alpha - 1
\right]\frac{dt}{t}
\right),
\qquad \|u\| = 1.
\end{equation}
Here the integrand extends continuously to $t=0$, since
\[
\left(\frac{1 + t}{1 - t}\right)^\alpha - 1 = 2\alpha t + O(t^2)
\]
as $t \to 0$, so the quotient by $t$ tends to $2\alpha$. Along $x_0 = u$, the scalar factor $g$ satisfies
\[
\log g(\xi u) = \int_0^\xi \left[\left(\frac{1+t}{1-t}\right)^\alpha - 1\right]\frac{dt}{t}.
\]
To compute the expansion, write
\[
\frac{1+t}{1-t} = 1 + 2t + 2t^2 + 2t^3 + O(t^4).
\]
Then, using the standard expansion of $(1+w)^\alpha$ with $w = 2t + 2t^2 + 2t^3 + \cdots$, we obtain
\[
\left(\frac{1+t}{1-t}\right)^\alpha
= 1 + 2\alpha t + 2\alpha^2 t^2 + \frac{2\alpha(1 + 2\alpha^2)}{3}t^3 + O(t^4).
\]
Hence
\[
\left(\frac{1+t}{1-t}\right)^\alpha - 1
= 2\alpha t + 2\alpha^2 t^2 + \frac{2\alpha(1 + 2\alpha^2)}{3}t^3 + O(t^4),
\]
and dividing by $t$ and integrating gives
\[
\log g(\xi u) = 2\alpha\xi + \alpha^2\xi^2 + \frac{2\alpha(1 + 2\alpha^2)}{9}\xi^3 + O(\xi^4).
\]
Exponentiating,
\[
g(\xi u) = 1 + 2\alpha\xi + 3\alpha^2\xi^2 + \frac{2\alpha(1 + 17\alpha^2)}{9}\xi^3 + \frac{\alpha^2(7 + 38\alpha^2)}{9}\xi^4 + \cdots.
\]
Thus
\[
A_2 = 2\alpha, \qquad A_3 = 3\alpha^2, \qquad
A_4 = \frac{2\alpha(1 + 17\alpha^2)}{9}, \qquad
A_5 = \frac{\alpha^2(7 + 38\alpha^2)}{9}.
\]

These two mappings realize the equality cases explicitly verified in Theorems \ref{pra29}, \ref{pra34}, and \ref{pra36}, the third branch of Theorem \ref{pra31}, and the indicated sharp branches and estimates of Theorem \ref{pra38}. The remaining sharpness cases are covered by Corollary \ref{pra19} combined with the corresponding one-variable extremal functions in \cite{cho2017, maowa1993, ali1996}.

\section{Second Hankel Determinant of Logarithmic Coefficients}

For a univalent function in one variable, the logarithmic coefficients $\gamma_n$ are defined by
\[
\log\frac{f(z)}{z} = 2\sum_{n=1}^{\infty} \gamma_n z^n.
\]
Vector division and logarithms are not defined in a general Banach space. The directional projection gives an analogue.

\begin{definition}\label{pra45}
Let $F(x) = g(x)x$ belong to $H(\mathbb{B})$ with $g(0) = 1$. For $x_0 \in \partial \mathbb{B}$ and $T_{x_0} \in T(x_0)$, since $g(0) = 1$, there exists a unique analytic function $L$ defined on a sufficiently small disk $\mathbb{D}_\delta$ around the origin such that
\[
e^{L(\xi)} = g(\xi x_0), \qquad L(0) = 0.
\]
Define the directional logarithmic coefficients $\Gamma_n$ by
\begin{equation}\label{pra46}
L(\xi) = 2\sum_{n=1}^{\infty} \Gamma_n \xi^n,
\qquad \xi \in \mathbb{D}_\delta.
\end{equation}
\end{definition}

Expanding
\[
\log(1 + w) = w - \frac{1}{2}w^2 + \frac{1}{3}w^3 - \cdots
\]
with
\[
w = g(\xi x_0) - 1 = A_2\xi + A_3\xi^2 + A_4\xi^3 + \cdots
\]
gives
\begin{align}
\Gamma_1 &= \frac{1}{2} A_2,\label{pra47}\\
\Gamma_2 &= \frac{1}{2}\left(A_3 - \frac{1}{2} A_2^2\right),\label{pra48}\\
\Gamma_3 &= \frac{1}{2}\left(A_4 - A_2 A_3 + \frac{1}{3} A_2^3\right).\label{pra49}
\end{align}
By Corollary \ref{pra16}, these relations agree with the classical one-variable formulas.

\begin{theo}\label{pra50}
Let $F \in \mathcal{S}^*_\alpha(\mathbb{B})$ with $F(x) = g(x)x$. For $x_0 \in \partial \mathbb{B}$ and $T_{x_0} \in T(x_0)$, define
\[
H_{2,1}^{\log}(F; x_0) := \Gamma_1 \Gamma_3 - \Gamma_2^2.
\]
Then
\[
\left| H_{2,1}^{\log}(F; x_0) \right|
=
|\Gamma_1 \Gamma_3 - \Gamma_2^2|
\le
\frac{1}{4}\alpha^2.
\]
The bound is sharp for the symmetric mapping \eqref{pra43}.
\end{theo}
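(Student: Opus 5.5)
The proof will proceed by slice reduction followed by a one-variable estimate. By Theorem \ref{pra12} and Corollary \ref{pra16}, the slice function $f(\xi)=g(\xi x_0)\xi$ lies in $\mathcal{S}^*_\alpha$ and $A_n=a_n$. Since $L(\xi)=\log(f(\xi)/\xi)$ near the origin, the directional logarithmic coefficients $\Gamma_n$ coincide with the classical logarithmic coefficients $\gamma_n$ of $f$; this is just \eqref{pra47}--\eqref{pra49} read with $a_n$ in place of $A_n$. Hence $|\Gamma_1\Gamma_3-\Gamma_2^2|=|\gamma_1\gamma_3-\gamma_2^2|$, and it suffices to prove the one-variable bound $|\gamma_1\gamma_3-\gamma_2^2|\le \alpha^2/4$ on $\mathcal{S}^*_\alpha$. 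This is the result of Kowalczyk and Lecko \cite{Kowalczyk2023}. I would cite it directly and, for completeness, sketch the argument as follows.

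For the one-variable estimate, differentiate $\log(f(z)/z)=2\sum\gamma_n z^n$ to get
\[
\frac{zf'(z)}{f(z)}-1=2\sum_{n\ge1} n\gamma_n z^n .
\]
Now write $zf'/f=p^\alpha$ with $p\in\mathcal{P}$, $p=1+c_1z+c_2z^2+c_3z^3+\cdots$. Expanding $p^\alpha$ to third order gives
\[
\gamma_1=\tfrac{\alpha}{2}c_1,\qquad
\gamma_2=\tfrac{\alpha}{4}\Bigl(c_2+\tfrac{\alpha-1}{2}c_1^2\Bigr),\qquad
\gamma_3=\tfrac{\alpha}{6}\Bigl(c_3+(\alpha-1)c_1c_2+\tfrac{(\alpha-1)(\alpha-2)}{6}c_1^3\Bigr).
\]
Substituting these into $\gamma_1\gamma_3-\gamma_2^2$ yields $\frac{\alpha^2}{16}$ times a polynomial in $c_1,c_2,c_3$. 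Rotational invariance lets us take $c_1=c\in[0,2]$. We then use the Libera--Z\l otkiewicz/Carlson parametrization of $c_2$ and $c_3$ in terms of $c$ and parameters $\zeta,\eta\in\overline{\mathbb{D}}$:
\[
2c_2=c^2+(4-c^2)\zeta,\qquad
4c_3=c^3+2(4-c^2)c\zeta-(4-c^2)c\zeta^2+2(4-c^2)(1-|\zeta|^2)\eta .
\]
The expression becomes a function of $(c,\zeta,\eta)$, and the bound is obtained by maximizing its modulus. We first take $|\eta|=1$ by the triangle inequality, then bound the result by a function $\Psi(c,|\zeta|)$ on $[0,2]\times[0,1]$, and show that $\Psi\le 4$. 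That inequality is exactly the claim $|\gamma_1\gamma_3-\gamma_2^2|\le\alpha^2/4$.

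The main obstacle is this two-variable maximization. Interior critical points and the edges $c=0$, $c=2$, $|\zeta|=0$, $|\zeta|=1$ must each be checked, uniformly in $\alpha\in(0,1]$. At $c=0$ with $|\zeta|=1$ we have $\gamma_1=0$ and $\gamma_2=\alpha/2\cdot\zeta$, which gives the value $\alpha^2/4$. The task is to confirm that no other boundary or interior point exceeds this value. I would handle it by treating the resulting expression as a quadratic in $|\zeta|$ and comparing it against the value at $c=0$. Rather than redoing this case analysis, I would rely on \cite{Kowalczyk2023} for this step.

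Sharpness is a direct computation for \eqref{pra43}. From Section \ref{pra42} we have $A_2=A_4=0$ and $A_3=\alpha$. Then \eqref{pra47}--\eqref{pra49} give $\Gamma_1=\Gamma_3=0$ and $\Gamma_2=\alpha/2$. Hence $|\Gamma_1\Gamma_3-\Gamma_2^2|=\alpha^2/4$, so equality holds and the bound is attained in the Banach-space class.
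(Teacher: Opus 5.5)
Your proposal is correct and follows the paper's proof. Both reduce to the slice function via Theorem \ref{pra12}, identify $\Gamma_n$ with the classical logarithmic coefficients $\gamma_n$ of $f$, invoke the one-variable bound of Kowalczyk and Lecko \cite{Kowalczyk2023}, and check sharpness using $A_2=A_4=0$, $A_3=\alpha$ for \eqref{pra43}. Your Carath\'eodory formulas for $\gamma_1,\gamma_2,\gamma_3$ and the extremal point $c=0$, $|\zeta|=1$ are accurate, and, like the paper, you leave the two-variable maximization to the cited result.
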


\begin{proof}
By Theorem \ref{pra12}, the slice function $f(\xi) = g(\xi x_0)\xi$ belongs to $\mathcal{S}^*_\alpha$. The formulas \eqref{pra47}--\eqref{pra49} show that
\[
\Gamma_1 \Gamma_3 - \Gamma_2^2
=
\gamma_1(f)\gamma_3(f) - \gamma_2^2(f).
\]
Using the Carath\'eodory optimization from Theorem 1 of \cite{Kowalczyk2023}, with
\[
\frac{z f'(z)}{f(z)} = (p(z))^\alpha,
\]
one obtains
\[
|\Gamma_1 \Gamma_3 - \Gamma_2^2| \le \frac{1}{4}\alpha^2.
\]
For the symmetric mapping \eqref{pra43}, we have $A_2 = A_4 = 0$ and $A_3 = \alpha$, so
\[
\Gamma_1 = 0, \qquad \Gamma_2 = \frac{\alpha}{2}, \qquad \Gamma_3 = 0.
\]
Hence $\Gamma_1 \Gamma_3 - \Gamma_2^2 = -\alpha^2/4$, and the bound is attained.
\end{proof}

\section{Concluding Remarks}

The correspondence theorem (Theorem \ref{pra12}) and the lifting theorem (Proposition \ref{pra18}) together show that the extremal problems for polynomial directional coefficient functionals on the two classes have the same value. Specifically, Corollary \ref{pra19} states that for every polynomial $P$,
\[
\sup_{\substack{F \in \mathcal{S}^*_\alpha(\mathbb{B}),\\ x_0 \in \partial \mathbb{B},\ T_{x_0} \in T(x_0)}}
|P(A_1, \ldots, A_m)|
=
\sup_{f \in \mathcal{S}^*_\alpha} |P(a_1, \ldots, a_m)|.
\]
Thus, for polynomial coefficient functionals, the two extremal problems coincide.

The class of scalar-profile mappings
\[
F(x) = g(x)x
\]
is a subclass of the locally biholomorphic mappings on $\mathbb{B}$. For general locally biholomorphic mappings, the Fr\'echet derivative couples different directions, and the slice correspondence may fail. Thus the results here concern the scalar-profile subclass only. The extension to general locally biholomorphic mappings is not treated here.

The coefficient bounds for $|A_2 A_3 - A_4|$, $|A_2 A_4 - A_3^2|$, $|A_3 - A_2^2|$, $|A_4|$, and $|A_5|$ (Theorems \ref{pra31}--\ref{pra38}) are transferred from known one-variable results in \cite{cho2017, maowa1993, ali1996} via the correspondence theorem; they are not new one-variable estimates. The equality cases for the symmetric and asymmetric extremal mappings \eqref{pra43} and \eqref{pra44} are verified independently in the Banach space setting. The remaining equality cases are obtained by lifting the corresponding one-variable extremal functions via Proposition \ref{pra18}.

\section*{{\bf Declarations}}
\subsection*{Funding}
No Funding.
\subsection*{Data Availability Statement}
Data sharing is not applicable to this article as no datasets were generated or analyzed during the current study.
\subsection*{Conflict of Interest}
The authors declare that they have no conflict of interest.
\subsection*{Author Contributions}
Both authors contributed equally to this work.


\begin{thebibliography}{99}
\bibitem{ali1996} \textbf{Ali, R.M., Singh, V.:} On the fourth and fifth coefficients of strongly starlike functions. \textit{Results Math.} \textbf{29}, 197--202 (1996).
\bibitem{cho2017} \textbf{Cho, N.E., Kowalczyk, B., Kwon, O.S., Lecko, A., Sim, Y.J.:} Some coefficient inequalities related to the Hankel determinant for strongly starlike functions of order $\alpha$. \textit{J. Math. Inequal.} \textbf{11}(2), 429--439 (2017).
\bibitem{elin2024} \textbf{Elin, M., Jacobzon, F.:} Multidimensional analogs of the Fekete--Szeg\H{o} functional. \textit{Complex Anal. Oper. Theory} \textbf{18}, Article 42 (2024).
\bibitem{gong} \textbf{Gong, S.:} \textit{Convex and Starlike Mappings in Several Complex Variables}, 2nd edn. Science Press, Beijing (1998).
\bibitem{graham} \textbf{Graham, I., Kohr, G.:} \textit{Geometric Function Theory in One and Higher Dimensions}. Marcel Dekker, New York, Basel (2003).
\bibitem{hamada2023} \textbf{Hamada, H.:} Fekete--Szeg\H{o} problems for spirallike mappings and close-to-quasi-convex mappings on the unit ball of a complex Banach space. \textit{Results Math.} \textbf{78}, Article 109 (2023).
\bibitem{Kowalczyk2023} \textbf{Kowalczyk, B., Lecko, A.:} The second Hankel determinant of the logarithmic coefficients of strongly starlike and strongly convex functions. \textit{Rev. Real Acad. Cienc. Exactas Fis. Nat. Ser. A-Mat.} \textbf{117}, Article 91 (2023).
\bibitem{ma1992} \textbf{Ma, W., Minda, D.:} A unified treatment of some special classes of univalent functions. \textit{Proc. Conf. Complex Anal. Nankai Inst. Math.}, 157--169 (1992).
\bibitem{maowa1993} \textbf{Ma, W., Owa, S.:} Strongly starlike functions. \textit{Panam. Math. J.} \textbf{3}(2), 49--60 (1993).
\bibitem{pfaltzgraff} \textbf{Pfaltzgraff, J.A.:} Subordination chains and univalence of holomorphic mappings in $\mathbb{C}^n$. \textit{Math. Ann.} \textbf{210}(1), 55--68 (1974).
\bibitem{suffridge} \textbf{Suffridge, T.J.:} Starlike and convex maps in Banach spaces. \textit{Pacific J. Math.} \textbf{49}(2), 575--589 (1973).
\bibitem{xu2024} \textbf{Xu, Q.H., Hamada, H.:} Fekete--Szeg\H{o} problem and coefficient structures for compact structural mappings on bounded domains. \textit{Math. Nachr.} \textbf{297}(8), 2841--2858 (2024).
\bibitem{XuDlugosz} \textbf{Xu, Q., D\L{}ugosz, R., Liczberski, P.:} The sharp bound of the third Hankel determinant for quasi-convex mappings in complex Banach spaces. \textit{Complex Anal. Oper. Theory} \textbf{18}, Article 36 (2024).
\bibitem{xu2018_strongly} \textbf{Xu, Q., Xu, X.:} On the coefficient inequality for a subclass of strongly starlike mappings of order $\alpha$ in several complex variables. \textit{Results Math.} \textbf{73}, 73 (2018).
\end{thebibliography}
\end{document}